\def\citep{\cite}
\def\Tmin{T_{\textnormal{min}}}
\def\Tmax{T_{\textnormal{max}}}
\def\rmax{\rho_{\textnormal{max}}}
\def\T{\textnormal{T}}
\definecolor{lblue}{RGB}{0,110,152}
\definecolor{dred}{RGB}{171,67,53}
\providecommand{\blue}[1]{\color{black}{#1}\color{black}\hspace{0pt}}
\providecommand{\bblue}[1]{\color{black}{#1}\color{black}\hspace{0pt}}
\providecommand{\bl}[1]{\color{black}{#1}\color{black}\hspace{0pt}}
\providecommand{\bll}[1]{\color{black}{#1}\color{black}\hspace{0pt}}
\providecommand{\red}[1]{\color{red}{#1}\color{black}\hspace{0pt}}
\newtheorem{theorem}{Theorem}%[\arabic{section}]
\newtheorem{corollary}[theorem]{Corollary}
\newtheorem{proposition}[theorem]{Proposition}
\newtheorem{define}[theorem]{Definition}
\newtheorem{example}[theorem]{Example}
\newtheorem{assumption}[theorem]{Assumption}
\newtheorem{remark}[theorem]{Remark}
\newcommand{\mendth}{\hfill \ensuremath{\vartriangle}}
\DeclareMathOperator{\He}{Sym}
\DeclareMathOperator*{\diag}{diag}
\DeclareMathOperator{\eps}{\varepsilon}
\newenvironment{proof}{{\it Proof :~}}{\hfill$\diamondsuit$\\}
\def\T{\intercal}
\begin{document}
\title{Stability analysis and stabilization of LPV systems with jumps and (piecewise) differentiable parameters using continuous and sampled-data controllers}%\thanks{This paper is the journal version of the conference paper \citep{Briat:17ifacLPV}.}

\author{Corentin Briat\thanks{corentin@briat.info; http://www.briat.info}}
\date{}
%%\author[Second]{ and Alexandre Seuret}\ead{Alexandre.Seuret@gipsa-lab.grenoble-inp.fr}\ead[url]{http://www.gipsa-lab.fr/\textasciitilde alexandre.seuret}

%\author[First]{First A. Author}
%\author[Second]{Second B. Author, Jr.}
%\author[Third]{Third C. Author}
%
%\address[First]{National Institute of Standards and Technology,
%   Boulder, CO 80305 USA (e-mail: author@ boulder.nist.gov).}
%\address[Second]{Colorado State University,
%   Fort Collins, CO 80523 USA (e-mail: author@lamar. colostate.edu)}
%\address[Third]{Electrical Engineering Department,
%   Seoul National University, Seoul, Korea, (e-mail: author@snu.ac.kr)}

%\address{Department of Biosystems Science and Engineering, ETH-Z\"{u}rich, Switzerland.}
\maketitle

\begin{abstract}
Linear Parameter-Varying (LPV) systems with jumps and piecewise differentiable parameters is a class of hybrid LPV systems for which no tailored stability analysis and stabilization conditions have been obtained so far\footnote{Except of course in the conference version of this work.}. We fill this gap here by proposing an approach based on a clock- and parameter-dependent Lyapunov function yielding stability conditions under both constant and minimum dwell-times. Interesting adaptations of the latter result consist of a minimum dwell-time stability condition for uncertain LPV systems and LPV switched impulsive systems. The minimum dwell-time stability condition is notably shown to naturally generalize and unify the well-known quadratic and robust stability criteria all together. Those conditions are then adapted to address the stabilization problem via timer-dependent and a timer- and/or parameter-independent (i.e. robust) state-feedback controllers, the latter being obtained from a relaxed minimum dwell-time stability condition involving slack-variables. Finally, the last part addresses the stability of LPV systems with jumps under a range dwell-time condition which is then used to provide stabilization conditions for LPV systems using a sampled-data state-feedback gain-scheduled controller. The obtained stability and stabilization conditions are all formulated as infinite-dimensional semidefinite programming problems which are then solved using sum of squares programming. Examples are given for illustration.

\noindent\textit{Keywords: LPV systems; hybrid systems; sampled-data control; dwell-time; sum of squares}
\end{abstract}

\section{Introduction}

 \bll{Impulsive systems are an important class of hybrid systems that can be used to represent a wide variety of real-world processes such as systems with impacts and robotic systems \cite{Bainov:89,Brogliato:00}, sampled-data systems \cite{Sivashankar:94,Goebel:12}, networked control systems \cite{Antunes:13}, vaccination in epidemiological networks \cite{Briat:09h}, biological reaction networks \cite{Anderson:15}, etc. Theoretical results pertaining to this class of systems have already been obtained such as the (finite-time) stability analysis and control of impulsive LTI and timer-dependent systems \cite{Briat:13d,Lawrence:20}, impulsive LTV systems \cite{Medina:09,Amato:14,Amato:16,Briat:19:Linf}, stochastic LTI impulsive systems \cite{Antunes:09,Antunes:09b,Antunes:13}, and general nonlinear systems \cite{Hespanha:08,Michel:08,Goebel:12}. Most approaches are based on the use of Lyapunov functions \cite{Hespanha:08}, Lyapunov functionals \cite{Naghshtabrizi:08}, looped-functionals \cite{Briat:11l} and time- or timer-dependent Lyapunov functions \cite{Medina:09,Goebel:12,Briat:13d} but other methods also exist such as vector Lyapunov functions \cite{Lakshmikantham:91}. In particular, the concept of \emph{dwell-time}, initially introduced for the analysis of switched systems \cite{Morse:96,Hespanha:99,Geromel:06b}, plays an essential role for establishing interesting stability results for impulsive systems subject to families of sequences of impulse instants instead of a single one. Dwell-times are simply the times between two consecutive impulse instants which may be constrained in order to define families of impulse instants. For instance, the minimum dwell-time constraint imposes a lower bound on the dwell-times whereas the range dwell-time constraint specifies that the dwell-time values belong to some known interval.}

 \bll{On the other hand, Linear Parameter-Varying (LPV) systems \citep{Shamma:88phd,Mohammadpour:12,Briat:book1} have been extensively studied over the last 30 years as they can model linear systems that intrinsically depend on time-varying parameters \citep{Wu:95} and approximate nonlinear systems \citep{Shamma:88phd,Shamma:92}. More importantly, they provide a natural framework for the systematic design of gain-scheduled controllers using, for instance, an adaptation of robust control methods \cite{Packard:94a,Apkarian:95a}. For those reasons, they have found applications in various fields such as automotive suspensions systems \citep{Poussot:10}, aeroelastic control  \citep{Seiler:12,Pfifer:15}, aperiodic sampled-data systems \citep{Robert:10}; see also \cite{Mohammadpour:12,Briat:book1,Hoffmann:15} for more examples. Over the past recent years, this field has been considerably consolidated by numerous theoretical approaches and numerical tools \citep{Packard:94a,Apkarian:95a,Apkarian:98a,Wu:01,Wu:06b,Scherer:01,Scherer:06,Scherer:12,Scherer:15,Briat:book1,Briat:15d,Balas:15}. On a more conceptual perspective, and as discussed in \citep{Briat:book1,Briat:15d}, LPV systems are often separated into two main and opposed categories depending on whether the parameter trajectories are continuously differentiable or can vary arbitrarily fast, possibly including discontinuities. This strict separation gave rise to the concepts of robust and quadratic stability, where the former considers a parameter-dependent Lyapunov function \cite{Wu:95,Apkarian:98a} and the latter a parameter-independent one \cite{Becker:94,Apkarian:95a}. But this classification is very crude since these two families are quite extreme and are clearly unadapted to deal with piecewise differentiable parameter trajectories. Indeed, such trajectories do not belong to the first category because of the presence of discontinuities. Moreover, while those trajectories technically belong to the second category, considering a quadratic stability criterion would result in too conservative results because such an approach would miss the fact that the parameter trajectories actually have bounded derivatives between discontinuities. This remark motivated the consideration of LPV systems with piecewise constant parameters in \citep{Briat:15d} in order to demonstrate the benefits of using a more accurate description of the parameter trajectories for the establishing the stability of LPV systems for which a robust stability analysis would have been inapplicable and a quadratic stability analysis proven inconclusive. Those results were obtained using an adaptation of the approaches in \citep{Briat:13d,Briat:14f,Briat:15i}, originally developed for switched and impulsive systems, and the use of the concept of dwell-time, measuring in this case the time between two consecutive discontinuities in the parameter trajectories. A particularly striking example was a system which was not quadratically stabilizable but which became stabilizable as long as a lower bound was imposed on the dwell-times. This is an important remark as this shows that neglecting even a small gap between consecutive discontinuities, a very realistic and practical assumption, may lead to an incorrect assessment of the properties of a system, here its stabilizability.}

\bll{One of the objectives of this paper (and part of the main objective of its conference version \cite{Briat:17ifacLPV}) is to extend those results to the case of LPV systems with piecewise differentiable parameters. Here, we go beyond the conference version of the paper by considering LPV impulsive systems with piecewise differentiable parameters. Such parameter trajectories may arise when an impulsive LPV system is obtained as an approximation of a nonlinear impulsive system, or simply when the parameters naturally have such a behavior \cite{Joo:14}. They can also be used to approximate parameter trajectories that exhibit intermittent very fast, yet smooth, variations. It is worth mentioning here that impulsive LPV systems have not yet been fully addressed in the literature until now. Using a clock- and parameter-dependent Lyapunov function, both the cases of periodic and aperiodic jumps in the state and the parameter trajectories are considered by relying on the use of the concepts of constant and minimum dwell-times. A relaxed result is also provided in order to derive stability conditions for uncertain impulsive LPV systems subject to polytopic uncertainties and for switched impulsive LPV systems. We then prove that the well-known quadratic stability and robust stability conditions can be recovered from the minimum dwell-time stability conditions: quadratic stability is recovered when the lower bound on the dwell-time tends to zero while robust stability is retrieved when this bound goes to infinity. A similar result was obtained in \cite{Briat:15d} in the context of piecewise constant parameters.}

\bll{Those results are then exploited to derive convex conditions for the design of gain-scheduled controllers for LPV systems with piecewise differentiable parameters, a problem that has never been addressed so far. A possible limitation of the approach is that it leads to timer-dependent controllers, which may be difficult to implement in practice, in particular if the gain of the controller varies a lot over short time-scales. To remedy this issue, the relaxed stability result previously derived for the analysis of uncertain systems is used to provide stabilization conditions using a timer-independent controller, a result which is also novel. A minor modification of this result can also be considered for the design of parameter-independent (i.e. robust) controllers.}

\bll{Differently from the conference version of this paper, the consideration impulsive LPV systems allows us to address the problem of designing sampled-data gain-scheduled state-feedback controllers for LPV systems due to the fact that a sampled-data system can be exactly represented as an impulsive system \cite{Sivashankar:94}. The sampled-data control problem of LPV systems has been considered in the past in \citep{Tan:02} using a discretization approach (assuming the parameters are piecewise constant), in \citep{Ramezanifar:12} using the so-called input-delay approach \cite{Fridman:04}, and in \citep{GomesdaSilva:15,Gomes:18} using looped-functionals. In the present paper, the approach is based on the use of a parameter- and clock-dependent Lyapunov function and yields convex stability conditions under a range dwell-time constraint. This result is then exploited to provide convex design conditions for the sampled-data control of LPV systems, a result that is more difficult to obtain in the context of looped- or Lyapunov-functionals, due to the presence of a larger number of decision variables leading to complex non-convex terms that are difficult to deal with efficiently. The price to pay, however, is that the stability and design conditions are stated in terms of infinite-dimensional semidefinite programs \citep{Briat:13d,Briat:14f,Briat:15i} which cannot be directly solved and need to be converted into finite-dimensional problems first. In this paper, we rely on Sum-of-Squares (SOS) programming \citep{Parrilo:00,sostools3} that transforms the original infinite-dimensional problem into a (possibly very large) finite-dimensional semidefinite program \citep{Boyd:04} under the assumption that the conditions are polynomial in the timer and the parameter variables.}\\

%which have been shown to be a suitable approach for the design of controllers for linear hybrid systems or their analysis when subject to uncertainties; see e.g. \cite{Allerhand:11,Briat:13d,Briat:15f,Xiang:16,Briat:16tac} and references therein.

\noindent \textbf{Outline.} \bll{The structure of the paper is as follows: in Section \ref{sec:prel} preliminary definitions and results are given. Section \ref{sec:stab} develops the stability analysis results under constant and minimum dwell-times. The latter result is then extended to address the case of uncertain systems and impulsive switched LPV systems. Some correspondence with existing results in the literature are also provided together with a procedure detailing on how to solve the infinite-dimensional feasibility problems using SOS programming. The stability results of Section \ref{sec:stab} are then extended to control design in Section \ref{sec:stabz1}. Finally, Section \ref{sec:stabz2} is devoted to the analysis of impulsive LPV systems under range dwell-time and to the application of this result to the sampled-data control of LPV systems. The examples are treated in the related sections.}

\noindent \textbf{Notations.} The set of nonnegative and positive integers are denoted by $\mathbb{Z}_{\ge0}$ and $\mathbb{Z}_{>0}$, respectively. The set of symmetric matrices of dimension $n$ is denoted by  $\mathbb{S}^n$ while the cone of positive (semi)definite matrices of dimension $n$ is denoted by ($\mathbb{S}^n_{\succeq0}$) $\mathbb{S}_{\succ0}^n$. For some $A,B\in\mathbb{S}^n$, the notation that $A\succ(\succeq)B$ means that $A-B$ is positive (semi)definite. %The maximum and the minimum eigenvalue of a symmetric matrix $A$ are denoted by $\lambda_{\rm max}(A)$ and $\lambda_{\rm min}(A)$, respectively.
For a square matrix $A$, we define the shorthand $\He[A]=A+A^{\T}$. For any differentiable function $f(x,y)$, the partial derivatives with respect to the first and second argument evaluated at $(x,y)=(x^*,y^*)$ are denoted by $\partial_x f(x^*,y^*)$ and $\partial_y f(x^*,y^*)$, respectively. For a function $f$, the right-handed limit at a point $t$ in its domain is defined as $\textstyle f(t^+)=\lim_{s\downarrow t}f(s)$.

\section{Preliminaries}\label{sec:prel}

LPV systems with \bblue{state-}jumps are dynamical systems which can be described as
\begin{equation}\label{eq:mainsyst}
\begin{array}{rcl}
    \dot{x}(t)&=&A(t-t_k,\rho(t))x(t),\ t\in(t_k,t_{k+1}],k\in\mathbb{Z}_{\ge0}\\
    x(t_k^+)&=&J(\rho(t_k))x(t_k),k\in\mathbb{Z}_{>0}\\
    x(0^+)&=&x(0)=x_0,\ t_0=0
\end{array}
\end{equation}
where $x,x_0\in\mathbb{R}^n$ are the state of the system and the initial condition, respectively. \bll{The trajectories of the above system are left-continuous; i.e. $\textstyle\lim_{s\uparrow t_k}x(s)=x(t_k)$ and $\textstyle\lim_{s\downarrow t_k}x(s)=x(t_k^+)$. The trajectories of the parameters $\rho(\cdot)$ are assumed to be piecewise differentiable and such that (i) $\rho:\mathbb{R}_{\ge0}\to\mathcal{P}\subset\mathbb{R}^N$, where $\mathcal{P}$ is compact and connected, and (ii) $\dot{\rho}\in\mathcal{Q}(\rho)\subset\mathcal{D}\subset\mathbb{R}^N$ everywhere $\dot{\rho}$ exists. The set $\mathcal{Q}(\rho)$ contains all the possible values for the parameter derivatives when the value of the parameters is $\rho$. This set is important as it allows to prevent the parameters from leaving the set $\mathcal{P}$. On the other hand, the set $\mathcal{D}$ is a set that contains all the possible values for the parameter derivatives, irrespectively of the current value of the parameters. Considering $\dot{\rho}\in\mathcal{D}$ would technically allow the parameters to leave $\mathcal{P}$ -- an example is given later. The matrix-valued functions $A:\mathbb{R}_{\ge0}\times\mathcal{P}\mapsto\mathbb{R}^{n\times n}$ and $J:\mathcal{P}\mapsto\mathbb{R}^{n\times n}$ are assumed to be continuous. Some additional assumptions will sometimes be considered and they will be mentioned when this is the case. The state discontinuities only occur at the times $t_k$, $k\in\mathbb{Z}_{>0}$  and no discontinuity occurs at $t_0=0$. We make the following assumptions on the sequence $\{t_k\}_{k\in\mathbb{Z}_{\ge0}}$ of impulse times: (i) it is increasing and (ii) it grows without bound; i.e. $t_{k+1}>t_k$ and $t_k\to\infty$ as $k\to\infty$. The \emph{dwell-times} are defined as $T_k:=t_{k+1}-t_k$ where $k\in\mathbb{Z}_{\ge0}$.} \bll{The following assumption will sometimes be used in order to simplify the exposition of the results:
\begin{assumption}\label{hyp:param}
When the parameters are independent of each other, then we can safely assume that $\mathcal{P}$ is a box and that the following decompositions hold $\mathcal{P}=\mathcal{P}_1\times\ldots\times\mathcal{P}_N$ where $\mathcal{P}_i:=[\underline{\rho}_i,\ \bar{\rho}_i]$, $\underline{\rho}_i\le\bar{\rho}_i$ and $\mathcal{D}=\mathcal{D}_1\times\ldots\times\mathcal{D}_N$ where $\mathcal{D}_i:=[\underline{\nu}_i,\ \bar{\nu}_i]$, $\underline{\nu}_i<0<\bar{\nu}_i$. We also define the set of vertices of $\mathcal{D}$ as $\mathcal{D}^v$; i.e. $\mathcal{D}^v:=\{\underline{\nu}_1,\ \bar{\nu}_1\}\times\ldots\{\underline{\nu}_N,\ \bar{\nu}_N\}$. Moreover, in such a case, we have that $\mathcal{Q}(\rho)=\mathcal{Q}_1(\rho)\times\ldots\times\mathcal{Q}_N(\rho)$ together with
\begin{equation}
  \mathcal{Q}_i(\rho):=\left\{\begin{array}{ccl}
    \mathcal{D}_i&&\textnormal{if }\rho_i\in(\underline{\rho}_i,\bar{\rho}_i),\\
    \mathcal{D}_i\cap\mathbb{R}_{\ge0}&&\textnormal{if }\rho_i=\underline{\rho}_i,\\
    \mathcal{D}_i\cap\mathbb{R}_{\le0}&&\textnormal{if }\rho_i=\bar{\rho}_i.
  \end{array}\right.
\end{equation}
\end{assumption}
It is important to stress that this assumption is not necessary and all the results can be vert generally stated; e.g. when $\mathcal{P}$ is defined by polynomial inequalities and equalities. This also illustrates the difference between $\dot{\rho}\in\mathcal{D}$ and $\dot{\rho}\in\mathcal{Q}(\rho)$ where we can see that the second inclusion prevents the parameters to leave $\mathcal{P}$.}

It is convenient to introduce here the following discrete-time system associated with the system \eqref{eq:mainsyst}:
\begin{define}
  The \emph{pre-jump embedded discrete-time system} associated with the system \eqref{eq:mainsyst} is given by
  \begin{equation}\label{eq:mainsystDT1}
\begin{array}{rcl}
    x(t_{k+1})&=&M_kx(t_k),k\ge0\\
    x(t_0)&=&x_0
  %  x(t_1)&=&\Phi_\rho(t_1,0)x_0
\end{array}
\end{equation}
where $M_k\in\mathcal{M}(T_k,\rho(t_k))$ and
%\begin{eqnarray}
\begin{equation}
\mathcal{M}(T,\varpi):=\left\{\Psi(T)J_k(\varpi)\left|\begin{array}{l}
\dot{\Psi}(\tau)=A(\tau,\varrho(\tau))\Psi(\tau),\Psi(0)=I,\\
\varrho(\tau)\in\mathcal{P}, \dot{\varrho}(\tau)\in\mathcal{Q}(\varrho(\tau)),\tau\in[0,T]
\end{array}\right.\right\},\label{eq:Mkm}
\end{equation}
%\overline{\mathcal{M}}_k(t_{k+1},t_k)&:=&\bigcup_{\varpi\in\mathcal{P}}\mathcal{M}(t_{k+1},t_k,\varpi),\label{eq:Mmbark}
%\end{eqnarray}
where $J_0(\cdot)=I$ and $J_k(\varpi)=J(\varpi)$, $k\ge1$.
\end{define}

\bll{Due to the fact that the parameters admit an infinite number of possible trajectories, the discrete-time system above can be understood as an uncertain system. Indeed, the state matrix $M_k$ of the system \eqref{eq:mainsystDT1} can be readily seen to belong to the set $\mathcal{M}_k(T_k,\rho(t_k))$ which contains all the possible maps $x(t_k)\mapsto x(t_{k+1})$ for every parameter trajectories obeying the boundedness and differentiability assumptions, and starting from the value $\rho(t_k)$. Needless to say that this set is very complex and cannot be explicitly computed except, perhaps, in some very special or irrelevant cases. In this regard, establishing the stability of the discrete-time system \eqref{eq:mainsystDT1} directly from its discrete-time formulation is a formidable task. Some methods have been developed to address or, in fact, circumvent this problem. In particular, the looped-functional framework initiated in \cite{Seuret:12} and adapted to impulsive systems in \cite{Briat:11l} has been shown to overcome this difficulty by providing implicit discrete-time stability conditions disguised as continuous-time ones. This approach had the benefits of circumventing the need for the direct computation of state-transition matrices, which made the overall framework readily  applicable to uncertain and nonlinear systems. Another approach, which will be considered in this paper, relies on clock-dependent (or timer-dependent) Lyapunov functions which achieve exactly the same objectives as looped-functionals but with the extra feature of allowing for an immediate derivation of convex design conditions; see e.g. \cite{Briat:15f} for more details.}

\section{Stability analysis of LPV systems with jumps and piecewise differentiable parameters}\label{sec:stab}

\bll{The objective of this section is to present the main stability analysis results for LPV impulsive systems with piecewise continuous parameters together with some related discussions. Section \ref{sec:cstDT} presents the result pertaining to the constant dwell-time case whereas Section \ref{sec:minDT} addresses the minimum dwell-time case. This latter result is then generalized to the polytopic uncertain system case in Section \ref{sec:minDT2} and to impulsive switched LPV systems in Section \ref{sec:switched}. The connection between the minimum dwell-time stability result and the concepts of quadratic and robust stability is clarified in Section \ref{sec:QR}. Computational considerations are discussed in Section \ref{sec:computational} while examples are treated in Section \ref{sec:examples}.}

\subsection{Stability under constant dwell-time}\label{sec:cstDT}

We consider in this section the following family of periodically changing piecewise differentiable parameter trajectories
\begin{equation}\label{eq:paramcDT}
  \mathscr{P}_{\hspace{-2pt}\scriptscriptstyle{\bar{T}}}:=\left\{\begin{array}{c}
    \rho:\mathbb{R}_{\ge0}\mapsto \mathcal{P}\left|\begin{array}{c}
      \dot{\rho}(t)\in\mathcal{Q}(\rho(t)),t\in(t_k,t_{k+1}],T_k=\bar T,\\
    %\rho(t_{k+1})\ne\rho(t_{k+1}^+),\rho(t_0)=\rho(t_0^+),k\in\mathbb{Z}_{\ge0}
    t_0=0,\rho(t_0)=\rho(t_0^+),k\in\mathbb{Z}_{\ge0}
    \end{array}\right.\end{array}\right\},
\end{equation}
\bll{where $\bar T>0$, and $T_k:=t_{k+1}-t_k$ is the dwell-time defined in the previous section. In other words, the trajectories contained in this family can only exhibit jumps at the times $t_k=k\bar{T}$, $k\in\mathbb{Z}_{>0}$ and, hence, the distance between two potential successive state and parameter discontinuities is constant, whence the name \emph{constant dwell-time}. Although not the most interesting case, the constant dwell-time case is interesting to set-up the main ideas before addressing more complicated cases. It can also be useful to treat the case of periodic parameter systems with periodic discontinuities as also considered in \cite{Briat:15f}. Finally, it seems interesting to note that, in the present case, we have that $\mathcal{M}(T_k,\rho)=\mathcal{M}(T_0,\rho)$, $k\in\mathbb{Z}_{\ge0}$. This shows that the discrete-time system can be considered as a discrete-time LPV system scheduled with the parameter $\rho(t_k)$.} We have the following result:
\begin{theorem}[Constant dwell-time]\label{th:cstDT}
\bll{Assume that the parameter trajectories satisfy Assumption \ref{hyp:param} and let $\bar T\in\mathbb{R}_{>0}$ be given.} Assume further that there exist a bounded continuously differentiable matrix-valued function $P:[0, \bar{T}]\times\mathcal{P}\mapsto \mathbb{S}^n$, $P(\bar T,\rho)\in\mathbb{S}^n_{\succ0}$, $\rho\in\mathcal{P}$, and a scalar $\eps>0$ such that the conditions
     \begin{equation}\label{eq:cst:1}
      \partial_\tau P(\tau,\rho)+\sum_{i=1}^N\partial_{\rho_i} P(\tau,\rho)\mu_i+\He[P(\tau,\rho)A(\tau,\rho)]\preceq0
    \end{equation}
    and
     \begin{equation}\label{eq:cst:2}
      J(\rho)^{\T}P(0,\rho^+)J(\rho)-P(\bar{T},\rho)+\eps I_n\preceq0
    \end{equation}
    hold for all $(\tau,\rho,\rho^+,\mu)\in[0, \bar{T}]\times\mathcal{P}\times\mathcal{P}\times \mathcal{D}^v$. Then, the LPV system \eqref{eq:mainsyst} with parameter trajectories in $\mathscr{P}_{\hspace{-2pt}\scriptscriptstyle{\bar{T}}}$ is uniformly exponentially stable.

\bll{Moreover, when the above conditions hold, then there exists a matrix-valued function $Q:\mathcal{P}\mapsto\mathbb{S}^n_{\succ0}$ such that
\begin{equation}
  M^{\T}Q(\rho^+)M-Q(\rho)\prec0
\end{equation}
holds for all $M\in\mathcal{M}(\bar T,\rho)$ and all $(\rho,\rho^+)\in\mathcal{P}\times\mathcal{P}$ and the pre-jump embedded discrete-time system \eqref{eq:mainsystDT1} is uniformly exponentially stable.}\hfill\mendth
\end{theorem}
\bll{\begin{proof}
Let us consider the function $V(x,\tau,\rho)=x^{\T}P(\tau,\rho)x$ where $P$ is as defined in the result. From the definition of $P$, the compactness of $\mathcal{P}$, and the inequality \eqref{eq:cst:2}, there exist some scalars $\alpha_1,\alpha_2>0$ such that
\begin{equation}\label{eq:hdsjkhdkshdkshkjdhskdj}
  \alpha_1||x||_2^2\le V(x,\bar T,\rho)\le \alpha_2||x||_2^2
\end{equation}
 holds for all $\rho\in \mathcal{P}$. Note also that, by virtue of the convexity of the conditions, the condition \eqref{eq:cst:1} is feasible for all $(\tau,\rho,\rho^+,\mu)\in[0, \bar{T}]\times\mathcal{P}\times\mathcal{P}\times \mathcal{D}^v$ if and only if it is feasible for all $(\tau,\rho,\rho^+,\mu)\in[0, \bar{T}]\times\mathcal{P}\times\mathcal{P}\times \mathcal{D}$. Pre- and post-multiplying \eqref{eq:cst:1} by $x(t_k+\tau)^{\T}$ and $x(t_k+\tau)$, letting $(\rho,\mu)\leftarrow (\rho(t_k+\tau),\dot{\rho}(t_k+\tau))$, and integrating from 0 to $\bar T$ with respect to $\tau$ yields
\begin{equation}\label{eq:45646546546546dsdsd5}
  V(x(t_{k+1}),\bar T,\rho(t_{k+1}))-V(x(t_{k}^+,0,\rho(t_{k}^+)))\le0.
\end{equation}
Using now the fact that $x(t_{k+1})=\Phi_\rho(t_{k+1},t_k)x(t_k^+)$, we get
\begin{equation}
x(t_k^+)^{\T}\left(\Phi_\rho(t_{k+1},t_k)^{\T}P(\bar T,\rho(t_{k+1}))\Phi_\rho(t_{k+1},t_k)-P(0,\rho(t_{k}^+))\right)x(t_k^+)\le 0.
\end{equation}
As the above inequality holds for all $x(t_k^+)\in\mathbb{R}^n$, this is equivalent to saying that
\begin{equation}\label{eq:kjopDSkos;djsdopk;j}
\Phi_\rho(t_{k+1},t_k)^{\T}P(\bar T,\rho(t_{k+1}))\Phi_\rho(t_{k+1},t_k)-P(0,\rho(t_{k}^+))\preceq0.
\end{equation}
Considering now \eqref{eq:cst:2} with $\rho^+=\rho(t_{k}^+)$ and $\rho=\rho(t_{k})$ yields
  \begin{equation}%\label{eq:cst:2}
      J(\rho(t_{k}))^{\T}P(0,\rho(t_{k}^+))J(\rho(t_{k}))+\eps I_n\preceq P(\bar{T},\rho(t_{k}))
    \end{equation}
 which together with \eqref{eq:kjopDSkos;djsdopk;j} imply that
\begin{equation}\label{eq:hiddenDT:cstDT}
M_k^{\T}P(\bar T,\rho(t_{k+1}))M_k-P(\bar T,\rho(t_{k}))\preceq-\eps I
\end{equation}
holds for all $M_k\in\mathcal{M}(\bar T,\rho(t_k))$ and all $(\rho(t_k),\rho(t_{k+1}))\in\mathcal{P}\times\mathcal{P}$. Since $P(\bar T,\cdot)$ is positive definite, the above inequality implies that the pre-jump embedded discrete-time system \eqref{eq:mainsystDT1} is uniformly exponentially stable. Indeed, letting $V_{k}:=V(x(t_{k}),\bar T,\rho(t_{k}))$, then we have that $V_{k+1}-V_k\le-\eps||x(t_k)||_2^2$ which, together with \eqref{eq:hdsjkhdkshdkshkjdhskdj}, imply that
\begin{equation}
  V_{k+1}\le \psi V_k
\end{equation}
where $\psi:=1-\eps/\alpha_1\in(0,1)$. Therefore, we have that $V_k\le\psi^kV_0$ and that $||x(t_k)||_2^2\le \textstyle\frac{\alpha_2}{\alpha_1}\psi^k||x_0||_2^2$. This proves that $||x(t_k)||\to0$ as $k\to\infty$.

To show the uniform exponential stability of the system, it is enough to use the fact that the function $V$ is nonincreasing over each interval $(t_k,t_{k+1}]$. Hence, we have that
\begin{equation}\label{eq:jdksjldksjdksdldjsjdlk}
  V_{k+1}\le V(x(t_k+\tau),\tau,\rho(t_k+\tau))\le V(x(t_k^+),0,\rho(t_k^+))\le V_k, \tau\in(0,T_k]
\end{equation}
where the last inequality comes from \eqref{eq:cst:2}. This proves that $V(x(t_k+\tau),\tau,\rho(t_k+\tau))$ is positive for all $\tau\in(0,\bar T]$ and all $k\ge0$, and that since $V_k\to0$ as $k\to\infty$, then so is $\textstyle\sup_{\tau\in(0,T_k]} V(x(t_k+\tau),\tau,\rho(t_k+\tau))$. Therefore, the impulsive LPV system \eqref{eq:mainsyst} with parameter trajectories in $\mathscr{P}_{\hspace{-2pt}\scriptscriptstyle{\bar{T}}}$ is uniformly exponentially stable under constant dwell-time $\bar T$. The final statement of the result can be easily proven by considering \eqref{eq:hiddenDT:cstDT} with $Q(\cdot)=P(\bar T,\cdot)$.
\end{proof}}

\bll{This result deserves a few comments. First of all, the first condition implies that the function $V(x,\tau,\rho)$ is nonincreasing along the flow of the system whereas the second one implies that it is decreasing at jumps. Another point is that we only require the Lyapunov function to be positive definite at $\tau=\bar T$ because this condition implies the positivity of the function for all timer values, as shown in \eqref{eq:jdksjldksjdksdldjsjdlk}. This may seem anecdotic at first sight but requiring that $P(\bar T,\rho)$ to be positive definite for all $\rho\in\mathcal{P}$ is computationally much less expensive than demanding that $P(\tau,\rho)$ be positive definite for all $(\tau,\rho)\in[0,\bar T]\times\mathcal{P}$. The same remark was also raised in \cite{Briat:13d}. Another comment is that the conditions in the above result formulate, in fact, a robust discrete-time stability conditions for the pre-jump embedded discrete-time system \eqref{eq:mainsystDT1} as stated in the concluding statement of the result. Finally, it is interesting to point out that, as long as stability is concerned, the result remains valid for all sequences of dwell-times which are eventually constant; i.e. for which there exists a $\kappa\in\mathbb{Z}_{\ge0}$ such that $T_k=T_\kappa$ for all $k\ge\kappa$. To see this, it is enough to observe that, since the system is linear, the stability of the system can be analyzed from the initial time $t_\kappa$ with $x(t_\kappa)$ playing the role of initial conditions.}

\subsection{Stability under minimum dwell-time - Nominal case}\label{sec:minDT}

Let us consider now the family of piecewise differentiable parameter trajectories given by
\begin{equation}\label{eq:minDTparam}
 \mathscr{P}_{\hspace{-1mm}{\scriptscriptstyle\geqslant\bar{T}}}:=\left\{\begin{array}{c}
    \rho:\mathbb{R}_{\ge0}\mapsto \mathcal{P}\left|\begin{array}{c}
     \dot{\rho}(t)\in\mathcal{Q}(\rho(t)),t\in(t_k,t_{k+1}],T_k\ge\bar T\\
    t_0=0,\rho(t_0)=\rho(t_0^+),k\in\mathbb{Z}_{\ge0}
    \end{array}\right.\end{array}\right\}
\end{equation}
where $T_k:=t_{k+1}-t_k$ and $\bar T>0$.  \bll{Unlike in the constant dwell-time case, the jumps here occur aperiodically with the restriction that the minimum time between two consecutive jumps is given by $\bar T$.} We also make the following assumption on the matrix of the system:
\bll{\begin{assumption}\label{hyp:A}
  The state matrix of the system \eqref{eq:mainsyst} is such that $A(\bar{T}+s,\rho(\bar T+s))=A(\bar T,\rho(\bar T+s))$ for all $s\ge0$ and all $\rho(\bar T+s)\in\mathcal{P}$.
\end{assumption}}
\bll{This assumption arises from the fact that, when one wants to stabilize an impulsive or a switched LTI system using a state-feedback controller in the minimum dwell-time setting, a natural structure for this controller is to be timer-dependent until the value of the dwell-time is reached, and from then on, keep the value of the gain of the controller constant with the timer value locked at $\bar T$; see e.g. \cite{Allerhand:11,Briat:13d,Briat:19:Linf} where the same assumption has been considered. This means that if one wants to consider the stability theorem under minimum dwell-time for stabilization purposes, the considered system needs to satisfy such a property. This assumption is hence considered so that this very property is met by the system. This will become much clearer in Section \ref{sec:stabz:minDT} when the stabilization problem will be addressed.}
We then have the following result:
\begin{theorem}[Minimum dwell-time]\label{th:minDT}
\bll{Assume that the parameter trajectories satisfy Assumption \ref{hyp:param}, that the matrix of the system \eqref{eq:mainsyst} satisfies Assumption \ref{hyp:A}, and let $\bar T\in\mathbb{R}_{>0}$ be given.} Assume further that there exist a matrix-valued function $P:[0, \bar{T}]\times\mathcal{P}\mapsto \mathbb{S}^n$, $P(\bar T,\rho)\in\mathbb{S}^n_{\succ0}$, $\rho\in\mathcal{P}$, and a scalar $\eps>0$ such that the conditions
       \begin{equation}\label{eq:minDT:2}
      \partial_\tau P(\tau,\rho)+\sum_{i=1}^N\partial_{\rho_i} P(\tau,\rho)\mu_i+\He[P(\tau,\rho)A(\tau,\rho)]+\eps I\preceq0,
    \end{equation}
     \begin{equation}\label{eq:minDT:1}
      \sum_{i=1}^N\partial_{\rho_i} P(\bar{T},\rho)\mu_i+\He[P(\bar{T},\rho)A(\bar{T},\rho)]+\eps I\preceq0,
    \end{equation}
    and
     \begin{equation}\label{eq:minDT:3}
      J(\rho)^{\T}P(0,\rho^+)J(\rho)-P(\bar{T},\rho)\preceq0
    \end{equation}
    hold for all $(\tau,\rho,\rho^+,\mu)\in[0, \bar{T}]\times\mathcal{P}\times\mathcal{P}\times \mathcal{D}^v$. Then, the LPV system \eqref{eq:mainsyst} with parameter trajectories in $\mathscr{P}_{\hspace{-1mm}{\scriptscriptstyle\geqslant \bar{T}}}$ is uniformly exponentially stable.

    \bll{Moreover, when the above conditions hold, then there exists a matrix-valued function $Q:\mathcal{P}\mapsto\mathbb{S}^n_{\succ0}$ such that
\begin{equation}
  M^{\T}Q(\rho^+)M-Q(\rho)\prec0
\end{equation}
holds for all $\textstyle M\in\bigcup_{T\ge\bar T}\mathcal{M}(T,\rho)$ and all $(\rho,\rho^+)\in\mathcal{P}\times\mathcal{P}$ and the pre-jump embedded discrete-time system \eqref{eq:mainsystDT1} is uniformly exponentially stable.}\hfill\mendth
\end{theorem}
\bll{\begin{proof}
As the proof is similar to that of Theorem \ref{th:cstDT}, it will be only sketched and details will be only given at places where the proofs differ. Let us consider here the function
\begin{equation}\label{eq:djslkjdsjdjsldjldjsldjlkasj}
V(x,\tau,\rho):=x^{\T}P(\min\{\tau,\bar T\},\rho)x
\end{equation}
where we can see that this function is similar to that of Theorem \ref{th:cstDT} with the difference that the matrix $P$ is locked at $\tau=\bar T$ when the timer reaches this value. The function $V(x,\bar T,\rho)$ is readily seen to be positive definite for the same reasons as in Theorem \ref{th:cstDT}. Pre- and post-multiplying \eqref{eq:minDT:2} by $x(t_k+\tau)^{\T}$ and $x(t_k+\tau)$, letting $(\rho,\mu)\leftarrow(\rho(t_k+\tau),\dot{\rho}(t_k+\tau))$ and integrating from 0 to $\bar{T}$ with respect to $\tau$ yields
\begin{equation}\label{eq:glab1}
V(x(t_k+\bar{T}),\bar{T},\rho(t_k+\bar{T}))\le \exp\left(-\dfrac{\eps}{\alpha_2}\bar T\right)V(x(t_k^+),0,\rho(t_k^+))
\end{equation}
%or, using the definition of $V$,
%\begin{equation}
%x(t_k^+)^{\T}\left[\Phi_\rho(t_{k}+\bar{T},t_k)^{\T}P(\bar T,\rho(t_{k}+\bar{T}))\Phi_\rho(t_{k}+\bar{T},t_k)-P(0,\rho(t_{k}^+))\right]x(t_k^+)\le0
%\end{equation}
%which must both hold for all $x(t_k^+)\in\mathbb{R}^n$, all $\Phi_\rho(t_{k+1},t_k)\in\mathcal{M}_k(\rho(t_k^+))$, and all $\rho(t_{k}^+)\in\mathcal{P}$.
%
Pre- and post-multiply now \eqref{eq:minDT:1} by $x(t)^{\T}$ and $x(t)$, substituting $\rho=\rho(t)$ for $t\in[t_k+\bar{T},t_{k+1}]$ shows that is equivalent to saying that
\begin{equation}
  %\dfrac{\partial}{\partial t}V(x(t),\bar{T},\rho(t))
  \dot{V}(x(t),t-t_k,\rho(t))\le -\eps||x(t)||_2^2\le-\dfrac{\eps}{\alpha_2} V(x(t),t-t_k,\rho(t)),t\in(t_k+\bar{T},t_{k+1}].
\end{equation}
Therefore, we have that
\begin{equation}\label{eq:kdl;askd;lska;lk;l;klk;l;kl;kl;kl456465446}
  V(x(t),t-t_k,\rho(t))\le \exp\left(-\dfrac{\eps}{\alpha_2}(t-t_k-\bar{T})\right)V(x(t_k+\bar{T}),\bar{T},\rho(t_k+\bar{T})),t\in[t_k+\bar{T},t_{k+1}],
\end{equation}
and  this must hold for all $x(t_k+\bar{T})\in\mathbb{R}^n$, all $\rho(t_k+\bar{T})\in\mathcal{P}$ and all $\rho$ satisfying the boundedness and continuous differentiability conditions. Combining \eqref{eq:glab1} and \eqref{eq:kdl;askd;lska;lk;l;klk;l;kl;kl;kl456465446} yields
\begin{equation}\label{eq:jdksjkdlsajklddsljksjlk}
\begin{array}{rcl}
  V(x(t),t-t_k,\rho(t))&\le &\exp\left(-\dfrac{\eps}{\alpha_2}(t-t_k)\right)V(x(t_k^+),0,\rho(t_k^+))\\
  &\le &\exp\left(-\dfrac{\eps}{\alpha_2}(t-t_k)\right)V(x(t_k),T_k,\rho(t_k))\\
  &\le &\exp\left(-\dfrac{\eps}{\alpha_2}(t-t_k)\right)V(x(t_k),\bar T,\rho(t_k))
\end{array}
\end{equation}
for all $t\in(t_k,t_{k+1}]$, where the second inequality follows from \eqref{eq:minDT:3} and the last one from the definition of $V$ in \eqref{eq:djslkjdsjdjsldjldjsldjlkasj}. In particular, we have that
\begin{equation}\label{eq:jdkjskdjksjdjskdjskdjsdksjdjsdksj}
  V(x(t_{k+1}),\bar T,\rho(t_{k+1}))\le\xi V(x(t_{k}),\bar T,\rho(t_{k}))
\end{equation}
where $\xi=\exp\left(-\eps\bar T/\alpha_2\right)$. Since $\xi\in(0,1)$, then $||x(t_k)||\to0$ as $k\to\infty$. Using the same argument as in the proof of Theorem \ref{th:cstDT}, we can prove that $||x(t)||\to0$ as $t\to\infty$ and that the impulsive LPV system \eqref{eq:mainsyst} with parameter trajectories in $\mathscr{P}_{\hspace{-1mm}{\scriptscriptstyle\geqslant\bar{T}}}$ is uniformly exponentially stable under minimum dwell-time $\bar T$. Finally, the last statement can be proven by expanding \eqref{eq:jdkjskdjksjdjskdjskdjsdksjdjsdksj} and letting $Q(\cdot)=P(\bar T,\cdot)$.
\end{proof}}

\bll{This result also deserves a few comments. One can see first that it involves one more condition than the constant dwell-time result and that this extra condition characterizes the long-run stability of the system when the dwell-time exceeds the minimum dwell-time value. Another interesting point is that despite the fact that the dwell-time can be arbitrarily large, one just needs to verify the conditions over $[0,\bar T]$. This dramatically simplifies the problem as it allows for the consideration of matrix-valued polynomials $P(\tau,\rho)$ and $A(\tau,\rho)$ that would have grown without bound if we had to check the conditions over $[0,T_k]$ instead of $[0,\bar T]$. Finally, for the same reasons as for Theorem \ref{th:cstDT}, this result remains valid for sequences of dwell-times eventually satisfying a minimum dwell-time condition; i.e. for which there exists a $\kappa\in\mathbb{Z}_{\ge0}$ such that for all $k\ge\kappa$, we have that $T_k\ge\bar T$.}

%\red{As for Theorem \ref{th:cstDT}, the above result states a discrete-time stability condition formulated in terms of the existence of a matrix-valued function $Q:\mathcal{P}\mapsto \mathbb{S}^n_{\succ0}$ such that
%\begin{equation}
%  M^{\T}Q(\rho^+)M-Q(\rho)\prec0
%\end{equation}
%for all $M\in\bigcup_{T\ge\bar T}\bar{\mathcal{M}}(T,\rho)$ and all $(\rho,\rho^+)\in\mathcal{P}\times\mathcal{P}$.}

\subsection{Stability under minimum dwell-time - Uncertain case}\label{sec:minDT2}

It seems interesting to address the uncertain case where the matrices of  the system are subject to constant polytopic uncertainties, that is, with a slight abuse of language, when the matrices of the system are considered to be given by
\begin{equation}\label{eq:uncertain}
  A(\tau,\rho,\lambda)=\sum_{j=1}^M\lambda_jA_j(\tau,\rho)\ \textnormal{ and }  J(\rho,\lambda)=\sum_{j=1}^M\lambda_jJ_j(\rho)
\end{equation}
where $\lambda\in\Lambda_M$ is the uncertain vector which belongs to $M$-unit simplex $\Lambda_M$ defined as
\begin{equation}
  \Lambda_M:=\left\{\lambda\in\mathbb{R}_{\ge0}^M:||\lambda||_1=1\right\}.
\end{equation}
Dealing with such a type of uncertainties can be done using a Lyapunov function that depends affinely on the polytopic parameters. However, due to the product between the Lyapunov matrix $P$ and the matrices of the system $A$ and $J$, the resulting LMI would be quadratic in the polytopic parameter and nonconvex in general. \bll{An elegant way for solving this issue is through the use of so-called dilated LMI conditions which involve slack variables and which can be obtained using Finsler's lemma \cite{SkeltonIG:97a} or the Projection lemma \cite{Gahinet:94a}. Dilated LMI conditions are not novel and have been already considered in the past in various contexts \cite{Geromel:98,Oliveira:99,Tuan:03,Wu:10,Briat:book1,Ebihara:15}. Note also that the polytopic parameter $\lambda$ can be made time-varying at the expense of having to deal with their derivatives using, for instance, the approach described in \cite{Briat:book1}. This is omitted for brevity.} As in the previous section, we consider the following assumption:
\bll{\begin{assumption}\label{hyp:Al}
  The state matrix of the uncertain system \eqref{eq:mainsyst}-\eqref{eq:uncertain} is such that $A(\bar{T}+s,\rho(\bar T+s),\lambda)=A(\bar T,\rho(\bar T+s),\lambda)$ for all $s\ge0$ and all $\lambda\in\Lambda_M$ and all $\rho(\bar T+s)\in\mathcal{P}$.
\end{assumption}}
Then, we have the following result:
\begin{theorem}[Minimum dwell-time]\label{th:minDTrelax}
\bll{Assume that the parameter trajectories satisfy Assumption \ref{hyp:param}, that the matrix of the uncertain system \eqref{eq:mainsyst}-\eqref{eq:uncertain}  satisfies Assumption \ref{hyp:Al}, and let $\bar T\in\mathbb{R}_{>0}$ be given.} Assume further that there exist matrix-valued functions $P_j:[0, \bar{T}]\times\mathcal{P}\mapsto \mathbb{S}^n$, $P_j(0,\rho)\in\mathbb{S}^n_{\succ0}$, $j=1,\ldots,M$, $\rho\in\mathcal{P}$, $X_1,X_2:[0, \bar{T}]\times\mathcal{P}\times\mathcal{D}^v\mapsto \mathbb{R}^{n\times n}$, $Z_1,Z_2:\mathcal{P}\times\mathcal{P}\mapsto \mathbb{R}^{n\times n}$, and a scalar $\eps>0$ such that the conditions
       \begin{equation}\label{eq:minDT:2relax}
       \begin{bmatrix}
         -\He[X_1(\tau,\rho,\mu)] & X_1(\tau,\rho,\mu)^{\T}A_j(\tau,\rho)-X_2(\tau,\rho,\mu)+P_j(\tau,\rho)\\
         \star & \partial_\tau \bll{P_j(\tau,\rho)}+\sum_{i=1}^N\partial_{\rho_i} P_j(\tau,\rho)\mu_i+\He[X_2(\tau,\rho,\mu)^{\T}A_j(\tau,\rho)]+\eps I
       \end{bmatrix}\preceq0
    \end{equation}
     \begin{equation}\label{eq:minDT:1relax}
     \begin{bmatrix}
         -\He[X_1(\bar{T},\rho,\mu)] & X_1(\bar{T},\rho,\mu)^{\T}A_j(\bar{T},\rho)-X_2(\bar{T},\rho,\mu)+P_j(\bar{R},\rho)\\
         \star & \sum_{i=1}^N\partial_{\rho_i} P_j(\bar{T},\rho)\mu_i+\He[X_2(\bar{T},\rho,\mu)^{\T}A_j(\bar{T},\rho)]+\eps I
       \end{bmatrix}\preceq0
    \end{equation}
    and
     \begin{equation}\label{eq:minDT:3relax}
     \begin{bmatrix}
       P_j(0,\rho^+)-\He[Z_1(\rho^+,\rho)] & Z_1(\rho^+,\rho)^{\T}J_j(\rho)-Z_2(\rho^+,\rho)\\
        & -P_j(\bar{T},\rho)+\He[Z_2(\rho^+,\rho)^{\T}J_j(\rho)]
     \end{bmatrix}\preceq0
       \end{equation}
      hold for all $(\tau,\rho,\rho^+,\mu)\in[0, \bar{T}]\times\mathcal{P}\times\mathcal{P}\times \mathcal{D}^v$ and all $j=1,\ldots,M$. Then, the uncertain LPV system \eqref{eq:mainsyst}-\eqref{eq:uncertain} with parameter trajectories in $\mathscr{P}_{\hspace{-1mm}{\scriptscriptstyle\geqslant \bar{T}}}$ is uniformly exponentially stable.

     \bll{Moreover, there exists a matrix-valued function $Q:\mathcal{P}\times\Lambda_M\mapsto\mathbb{S}_{\succ0}^n$ such that the condition
      \begin{equation}
        M^{\T}Q(\rho^+,\lambda)M-Q(\rho,\lambda)\prec0
      \end{equation}
      holds for all $M\in\bigcup_{T\ge\bar T}\mathcal{M}_r(T,\rho,\lambda)$, all $(\rho,\rho^+)\in\mathcal{P}\times\mathcal{P}$ and all $\lambda\in\Lambda_M$ where
      \begin{equation}
        \mathcal{M}_r(T,\rho,\lambda):=\left\{\Psi(T)J_k(\varpi,\lambda)\left|\begin{array}{l}
    \dot{\Psi}(\tau)=A(\tau,\varrho(\tau),\lambda)\Psi(\tau),\Psi(0)=I,\\
    \varrho(\tau)\in\mathcal{P}, \dot{\varrho}(\tau)\in\mathcal{Q}(\varrho(\tau)),\tau\in[0,T]
\end{array}\right.\right\}
      \end{equation}
      where $J_0(\varpi,\lambda)=I$ and $J_k(\varpi,\lambda)=J(\varpi,\lambda)$, $k\ge1$. As a result, the pre-jump embedded discrete-time associated with \eqref{eq:mainsyst}-\eqref{eq:uncertain} is robustly uniformly exponentially stable.}
\end{theorem}
\begin{proof}
  Multiplying the conditions by $\lambda_j$ and summing over $j=1,\ldots,M$ yields the same conditions as in Theorem \ref{th:minDTrelax} with $(P_j(\tau,\rho),A_j(\tau,\rho),J_j(\rho))$ replaced by $(P(\tau,\rho,\lambda),A(\tau,\rho,\lambda),J(\rho,\lambda))$ where $\textstyle P(\tau,\rho,\lambda)=\sum_{j=1}^M\lambda_jP_j(\tau,\rho)$, $\textstyle A(\tau,\rho,\lambda)=\sum_{j=1}^M\lambda_jA_j(\tau,\rho)$ and $\textstyle J(\rho,\lambda)=\sum_{j=1}^M\lambda_jJ_j(\rho)$. The rest of the proof is based on a direct application of Finsler's lemma \cite{SkeltonIG:97a}. Indeed, the condition \eqref{eq:minDT:2relax} can be reformulated as
  \begin{equation}
  \begin{bmatrix}
    0 & P(\tau,\rho,\lambda)\\
    P(\tau,\rho,\lambda) & \partial_\tau P(\tau,\rho,\lambda)+\sum_{i=1}^N\partial_{\rho_i} P(\tau,\rho,\lambda)\mu_i
  \end{bmatrix}+\He\left(\begin{bmatrix}
    X_1(\tau,\rho,\mu)^{\T}\\
   X_2(\tau,\rho,\mu)^{\T}
  \end{bmatrix}\begin{bmatrix}
    -I & A(\tau,\rho)
  \end{bmatrix}\right)\preceq0.
  \end{equation}
  By virtue of Finsler's Lemma, this inequality is equivalent to
  \begin{equation}
  \begin{bmatrix}
    A(\tau,\rho,\lambda)\\
    I
  \end{bmatrix}^{\T}\begin{bmatrix}
    0 & P(\tau,\rho,\lambda)\\
    P(\tau,\rho,\lambda) & \partial_\tau P(\tau,\rho,\lambda)+\sum_{i=1}^N\partial_{\rho_i} P(\tau,\rho,\lambda)\mu_i
  \end{bmatrix}\begin{bmatrix}
    A(\tau,\rho,\lambda)\\
    I
  \end{bmatrix}\preceq0
  \end{equation}
  which is readily shown to be identical to \eqref{eq:minDT:2} with $A(\tau,\rho)$ and $P(\tau,\rho)$ replaced by $A(\tau,\rho,\lambda)$ and $P(\tau,\rho,\lambda)$, respectively. The equivalence of the other conditions with \eqref{eq:minDT:1} and \eqref{eq:minDT:3} are proved in the exact same way. \bll{The adaptation of the concluding statement also follows from the straightforward generalization of the set $\mathcal{M}(T,\varpi)$ to the uncertain case.}
\end{proof}

Interestingly, the decoupling between the Lyapunov matrix $P$ and the matrices of the system $A$ and $J$ is not only useful for efficiently dealing with polytopic uncertainties but can also be used to enforce certain constraints on the controller when stabilization is aimed. The explanation is that when structural constraints are enforced on the controller, the problem can only be kept convex by enforcing a similar structure on the Lyapunov function, thereby making the approach more conservative. However, when using the relaxed conditions, the structural constraints are delegated to the slack-variables $X_1$ and $X_2$ while the Lyapunov function is left unchanged -- a procedure that has been proven to dramatically limit the increase of the conservatism. This will be further detailed in Section \ref{sec:stabz1}.

\subsection{Connection with switched impulsive LPV systems}\label{sec:switched}

If we extend the original system by adding a piecewise constant parameter $\sigma$ which takes values in $\{1,\ldots,M\}$, we obtain the following class of systems taking the form of a switched impulsive LPV system with piecewise differentiable parameters:
\begin{equation}\label{eq:mainsystH_RDTsw}
\begin{array}{rcl}
    \dot{x}(t)&=&A(t-t_k,\rho(t),\bll{\sigma(t)})x(t),\ t\in(t_k,t_{k+1}],k\ge0\\
    x(t_k^+)&=&J(\rho(t_k),\sigma(t_k^+),\sigma(t_k))x(t_k),\ k\in\mathbb{Z}_{>0}.
  \end{array}
\end{equation}
Note that the jump map now depends on both the current value of the discrete-valued parameter $\sigma$ and the next one $\sigma^+$. While this may look non-causal at first sight, this is actually not the case since one can decide first the next value for the parameters $\rho$ and $\sigma$ and then make the state jump accordingly. Whenever, $J=I_n$ we recover a standard LPV switched system. We also make the following assumption:
\bll{\begin{assumption}\label{hyp:Ai}
  The state matrix of the switched impulsive LPV system \eqref{eq:mainsyst}-\eqref{eq:uncertain} is such that $A(\bar{T}+s,\rho(\bar T+s),\sigma(\bar T+s))=A(\bar T,\rho(\bar T+s),\sigma(\bar T+s))$ for all $s\ge0$, all $\sigma\in\{1,\ldots,M\}$ and all $\rho(\bar T+s)\in\mathcal{P}$.
\end{assumption}}

This leads to the following result which an adaptation of Theorem \ref{th:minDT} to the switched impulsive LPV systems case:
\begin{corollary}[Minimum dwell-time - Switched LPV systems]\label{th:minDTsw}
\bll{Assume that the parameter trajectories satisfy Assumption \ref{hyp:param}, that the matrix of the impulsive switched LPV system \eqref{eq:mainsystH_RDTsw} satisfies Assumption \ref{hyp:Al}, and let $\bar T\in\mathbb{R}_{>0}$ be given.}  Assume further that there exist a matrix-valued function $P:[0, \bar{T}]\times\mathcal{P}\times\{1,\ldots,M\}\mapsto \mathbb{S}^n$, $P(0,\rho,\sigma)\in\mathbb{S}^n_{\succ0}$, $(\rho,\sigma)\in\mathcal{P}\times\{1,\ldots,M\}$, and a scalar $\eps>0$ such that the conditions
       \begin{equation}\label{eq:minDT:2:relax}
      \partial_\tau P(\tau,\rho,\sigma)+\sum_{i=1}^N\partial_{\rho_i} P(\tau,\rho,\sigma)\mu_i+\He[P(\tau,\rho,\sigma)A(\tau,\rho,\sigma)]+\eps I\preceq0,
    \end{equation}
     \begin{equation}\label{eq:minDT:1:relax}
      \sum_{i=1}^N\partial_{\rho_i} P(\bar{T},\rho,\sigma)\mu_i+\He[P(\bar{T},\rho,\sigma)A(\bar{T},\rho,\sigma)]+\eps I\preceq0
    \end{equation}
    and
     \begin{equation}\label{eq:minDT:3:relax}
      J(\rho,\sigma^+,\sigma)^{\T}P(0,\rho^+,\sigma^+)J(\rho,\sigma^+,\sigma)-P(\bar{T},\rho,\sigma)\preceq0
    \end{equation}
    hold for all $\sigma,\sigma^+\in\{1,\ldots,M\}$, $\sigma^+\ne \sigma$, and all  $(\tau,\rho,\rho^+,\mu)\in[0, \bar{T}]\times\mathcal{P}\times\mathcal{P}\times \mathcal{D}^v$. Then, the switched impulsive LPV system \eqref{eq:mainsystH_RDTsw} with parameter trajectories in $\mathscr{P}_{\hspace{-1mm}{\scriptscriptstyle\geqslant \bar{T}}}$ is uniformly exponentially stable.

     \bll{Moreover, there exists a matrix-valued function $Q:\mathcal{P}\times\{1,\ldots,M\}\mapsto\mathbb{S}_{\succ0}^n$ such that the condition
      \begin{equation}
        M^{\T}Q(\rho^+,\sigma^+)M-Q(\rho,\sigma)\prec0
      \end{equation}
      holds for all $M\in\bigcup_{T\ge\bar T}\mathcal{M}_s(T,\rho,\sigma)$, all $(\rho,\rho^+)\in\mathcal{P}\times\mathcal{P}$ and all $(\sigma,\sigma^+)\in\{1,\ldots,M\}\times\{1,\ldots,M\}$ where
      \begin{equation}
        \mathcal{M}_s(T,\varpi,\sigma):=\left\{\Psi(T)J_k(\varpi,\sigma^+,\sigma)\left|\begin{array}{l}
        \dot{\Psi}(\tau)=A(\tau,\varrho(\tau),\sigma^+)\Psi(\tau),\Psi(0)=I,\\
        \varrho(\tau)\in\mathcal{P}, \dot{\varrho}(\tau)\in\mathcal{Q}(\varrho(\tau)),\tau\in[0,T]\\
        \sigma^+\in\{1,\ldots,M\}
\end{array}\right.\right\}
      \end{equation}
      where $J_0(\varpi,\sigma^+,\sigma)=I$ and $J_k(\varpi,\sigma^+,\sigma)=J(\varpi,\sigma^+,\sigma)$, $k\ge1$. As a result, the pre-jump embedded discrete-time associated with \eqref{eq:mainsystH_RDTsw} is uniformly exponentially stable.}
\end{corollary}

\subsection{Connection with quadratic and robust stability}\label{sec:QR}

Following the same lines as in \cite{Briat:15d}, it can be shown that the minimum dwell-time result stated in Theorem \ref{th:minDT} naturally generalizes and unifies the quadratic and robust stability conditions for LPV systems; i.e. whenever $J(\rho)=I_n$ in \eqref{eq:mainsyst}.  We recall first the notions of quadratic and robust stability:
\begin{define}\label{def:stab}
  \bll{Assume that the parameter trajectories satisfy Assumption \ref{hyp:param} and that the matrices of the LPV system \eqref{eq:mainsyst} satisfy $A(\tau,\rho)\equiv A(\rho)$ (timer-independent matrix) and $J(\rho)=I_n$. Then, the LPV system \eqref{eq:mainsyst} is }
  \begin{enumerate}[(a)]
    \item \textbf{quadratically stable} if there exists a matrix $P_q\in\mathbb{S}_{\succ0}^n$ such that
    \begin{equation}\label{eq:quadstab}
        A(\rho)^{\T}P_q+P_qA(\rho)\prec0
  \end{equation}
  holds for all $\rho\in\mathcal{P}$.
  \item \textbf{robustly stable} if there exists  a continuously differentiable matrix-valued function $P_r:\mathcal{P}\mapsto\mathcal{S}_{\succ0}^n$ such that
  \begin{equation}\label{eq:robstab}
      \sum_{i=1}^N\partial_{\rho_i} P_r(\rho)\mu_i+A(\rho)^{\T}P_r(\rho)+P_r(\rho)A(\rho)\prec0
  \end{equation}
  holds for all $\rho\in\mathcal{P}$ and all $\mu\in\mathcal{D}^v $.
  \end{enumerate}
\end{define}

We then have the following result:
\begin{theorem}
\bll{Assume that the parameter trajectories satisfy Assumption \ref{hyp:param} and that the matrices of the LPV system \eqref{eq:mainsyst} satisfy $A(\tau,\rho)\equiv A(\rho)$ (timer-independent matrix) and $J(\rho)=I_n$. Assume further that the parameter trajectories belong to $\mathscr{P}_{\hspace{-1mm}{\scriptscriptstyle\geqslant\bar{T}}}$ in \eqref{eq:minDTparam}}. Then, the following statements hold:
 \begin{enumerate}[(a)]
   \item  When $\bar T=0$ and $t_k\to\infty$ as $k\to\infty$, then the conditions of Theorem \ref{th:minDT} reduce the quadratic stability condition in Definition \ref{def:stab}.
   \item When $\bar T=\infty$, then the conditions of Theorem \ref{th:minDT} reduce to robust stability condition in Definition \ref{def:stab}.
 \end{enumerate}
\end{theorem}
\begin{proof}
  We prove first statement (a).  First note that since $\bar T=0$ and $t_k\to\infty$ as $k\to\infty$, then there is no accumulation point in the sequence of jumping instants and, therefore, the solution of the \bll{impulsive LPV  system is complete; i.e. it is defined for all $t\ge0$}. Clearly, if $\bar T=0$, then we will have that $P(\bar T,\rho)=P(0,\rho)$ and $P(\tau,\rho)=P(\bar T,\rho)$ for all $\tau\ge\bar T$. Hence, we obtain that $P(\tau,\rho)$ is constant for each $\rho$ and equal to $P(0,\rho)$. Substituting that expression in the jump condition \bll{\eqref{eq:minDT:3}} implies that the two following inequalities
\begin{equation}
  \begin{array}{rcl}
    P(0,\theta)-P(\tau,\eta)&=&P(0,\theta)-P(0,\eta)\preceq0,\\
    P(0,\eta)-P(\tau,\theta)&=&P(0,\eta)-P(0,\theta)\preceq0
  \end{array}
\end{equation}
hold for any $\theta\ne\eta$. The first expression arises when $\rho^+=\theta$ and $\rho=\eta$ while the other expression is when $\rho^+=\eta$ and $\rho=\theta$. Therefore, for those expressions above to be satisfied, it is necessary that we have $P(0,\eta)-P(0,\theta)=0$ for all $\eta\ne\theta\in\mathcal{P}$. This can only be true if $P$ is actually independent of the parameter and hence we need to have $P(\tau,\theta)=P_q$ for some $P_q\succ0$. Substituting that in \eqref{eq:minDT:1}-\eqref{eq:minDT:2} yield the quadratic stability condition \eqref{eq:quadstab}.

To prove statement (b), it is enough to remark that when $\bar T=\infty$, the system does not exhibit any impulsive behavior anymore and reduces to a continuous-time system. Dropping then the timer dependence i.e. (letting $P(\tau,\theta)=P_r(\theta)$) and ignoring the condition \eqref{eq:minDT:3} yield the robust stability condition as the conditions \eqref{eq:minDT:1}-\eqref{eq:minDT:2} both reduce to \eqref{eq:robstab} in this case.
\end{proof}

\subsection{Computational considerations}\label{sec:computational}

\begin{mybox*}
\caption{SOS program associated with Theorem \ref{th:minDT}}\label{box}
{\vspace{1mm}}
\noindent\fbox{
\parbox{\textwidth}{
    Find polynomial matrices $S,\Gamma_i,\Omega_i:[0,\bar{T}]\times\mathcal{P}\mapsto\mathbb{S}^n$, $\Theta_i,\Xi_i:\mathcal{P}\times\mathcal{P}\mapsto\mathbb{S}^n$, $\Upsilon_0,\Upsilon_i: [0,\bar{T}]\times\mathcal{P}\times \mathcal{D}^v \mapsto\mathbb{S}^n$, $i=1,\ldots,M$, such that
      \begin{itemize}
        \item $\Gamma_0,\Gamma_i,\Theta_i,\Xi_i,\Upsilon_i,\Upsilon_0,\Omega_i$, $i=1,\ldots,M$, are SOS matrices for all $\mu\in\mathcal{D}^v$,
        %
        %\item $P(\tau,\rho)-\sum_{i=1}^{M}\Gamma_i(\tau,\rho)g_i(\rho)-\Gamma_0(\tau,\rho)\tau( \bar T-\tau)-\eps I_n$ is SOS
        \item \bll{$P(\bar T,\rho)-\sum_{i=1}^{M}\Gamma_i(\rho)g_i(\rho)-\eps I_n$ is an SOS matrix},
        \item $-\sum_{i=1}^N\partial_{\rho_i} P(\tau,\rho)\mu_i-\partial_\tau P(\tau,\rho)-\He[P(\tau,\rho)A(\rho)] -\sum_{i=1}^{M}\Upsilon_i(\tau,\rho,\mu)g_i(\rho) -\Upsilon_0(\tau,\rho,\mu)\tau( \bar T-\tau)-\eps I_n$ is an SOS matrix for all $\mu\in\mathcal{D}^v$,
            \item $ -\sum_{i=1}^N\partial_{\rho_i} P(\bar{T},\rho)\mu_i-\He[P(\bar{T},\rho)A(\rho)]-\sum_{i=1}^{M}\Omega_i(\rho,\mu)g_i(\rho)-\eps I_n$  is an SOS matrix for all $\mu\in\mathcal{D}^v$,
        \item ${P(\bar T,\rho)-J(\rho)^{\T}P(0,\rho^+)J(\rho)-\sum_{i=1}^{N}\Theta_i(\rho^+,\rho)g_i(\rho^+)} -\sum_{i=1}^{M}\Xi_i(\rho^+,\rho)g_i(\rho)$ is an SOS matri.x
    \end{itemize}}}
\end{mybox*}

The conditions formulated in Theorem \ref{th:minDT} are infinite-dimensional semidefinite programs \blue{which can not be solved directly}. To make them tractable, we propose to consider an approach based on sum of squares programming \citep{Parrilo:00} that will result in a finite-dimensional semidefinite program which can then be solved using standard solvers such as SeDuMi \citep{Sturm:01a}. The conversion to a semidefinite program can be automatically performed using the package SOSTOOLS \citep{sostools3} to which we input the SOS program corresponding to the considered conditions. We illustrate below how an SOS program associated with some given conditions can be obtained. The set $\mathcal{P}$ defined in Assumption \ref{hyp:param} can be implicitly described as
\begin{equation}
  \mathcal{P}=\left\{\rho\in\mathbb{R}^N: g_{i}(\rho)\ge0, i=1,\ldots,N\right\}
\end{equation}
where $g_i(\rho):=(\bar{\rho}_i-\rho_i)(\rho_i-\underline{\rho}_i)$, $i=1,\ldots,N$. Additionally, we have that
\begin{equation}
  [0, \bar{T}]=\left\{\tau\in\mathbb{R}:\ f(\tau):=\tau( \bar{T}-\tau)\ge0\right\}.
\end{equation}
In what follows, we say that a symmetric polynomial matrix $\Theta(\cdot)$ is a sum of squares matrix (SOS matrix) or is SOS, for simplicity, if there exists a polynomial matrix $\Xi(\cdot)$ such that $\Theta(\cdot)=\Xi(\cdot)^{T}\Xi(\cdot)$. The following result provides the SOS formulation of Theorem \ref{th:minDT}:
\begin{proposition}\label{prog:periodic}
  Let $\eps, \bar{T}>0$ be given and  assume that the sum of squares program in Box \ref{box} is feasible. Then, the conditions of Theorem \ref{th:minDT} hold with the computed polynomial matrix $P(\tau,\theta)$ and the system \eqref{eq:mainsyst} is asymptotically stable for all $\rho\in\mathscr{P}_{\hspace{-1mm}{\scriptscriptstyle\geqslant\bar{T}}}$.
\end{proposition}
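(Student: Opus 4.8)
The plan is to show that each SOS constraint in Box \ref{box}, when satisfied, forces the corresponding matrix inequality in Theorem \ref{th:minDT} to hold pointwise on the relevant sets, after which the conclusion follows by directly invoking Theorem \ref{th:minDT}. This is the standard Positivstellensatz-type argument for SOS relaxations: an SOS matrix is positive semidefinite at every point, and subtracting a nonnegative combination of the constraint-defining polynomials $g_i(\theta)$ and $f(\tau)=\tau(\bar T-\tau)$ only makes a matrix ``more positive'' on the feasible set where those polynomials are nonnegative.

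First I would fix an arbitrary $(\tau,\theta,\eta,\mu)\in[0,\bar T]\times\mathcal{P}\times\mathcal{P}\times\mathcal{D}^v$ and evaluate each SOS expression there. For the second bullet: since the displayed matrix is SOS it is $\succeq0$ at $(\tau,\theta)$; because $g_i(\theta)\ge0$ (as $\theta\in\mathcal{P}$), $\tau(\bar T-\tau)\ge0$ (as $\tau\in[0,\bar T]$), and each $\Gamma^i,\Gamma$ is SOS hence $\succeq0$, we get $S(\tau,\theta)\succeq\sum_i\Gamma^i g_i+\Gamma\,\tau(\bar T-\tau)+\eps I_n\succeq \eps I_n\succ0$, so $S$ maps into $\mathbb{S}^n_{\succ0}$ and is continuously differentiable (being a polynomial matrix), as required by Theorem \ref{th:minDT}. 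For the third bullet, the same reasoning gives $-\partial_\rho S(\tau,\theta)\mu-\partial_\tau S(\tau,\theta)-\He[S(\tau,\theta)A(\theta)]\succeq\sum_i\Upsilon^i g_i+\Upsilon\,\tau(\bar T-\tau)\succeq0$, which is exactly \eqref{eq:minDT:2} (the $\eps I$ there is absorbed since one may rescale, or simply note $\eps>0$ is given and the argument is applied with that $\eps$; I would phrase it so the $\eps I_n$ slack in \eqref{eq:minDT:2} is handled by the $\eps$ appearing in the first and fourth bullets and a continuity/margin remark, or more cleanly by carrying the $\eps I_n$ term explicitly as the other bullets do — I would unify this in the write-up). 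The fifth bullet similarly yields \eqref{eq:minDT:1}, and the fourth bullet yields \eqref{eq:minDT:3} because $g_i(\theta)\ge0$ and $g_i(\eta)\ge0$ when $\theta,\eta\in\mathcal{P}$, giving $S(\bar T,\eta)-J(\eta)^TS(0,\theta)J(\eta)\succeq\eps I_n\succeq0$.

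Having established that the polynomial $S$ produced by the SOS program satisfies \eqref{eq:minDT:1}, \eqref{eq:minDT:2}, \eqref{eq:minDT:3} for all $\theta,\eta\in\mathcal{P}$, $\mu\in\mathcal{D}^v$, $\tau\in[0,\bar T]$, together with $S:[0,\bar T]\times\mathcal{P}\to\mathbb{S}^n_{\succ0}$ bounded (boundedness is automatic: a polynomial matrix on the compact set $[0,\bar T]\times\mathcal{P}$ is bounded) and continuously differentiable, all hypotheses of Theorem \ref{th:minDT} are met for the given $\bar T>0$ and $\eps>0$. Invoking that theorem gives asymptotic stability of \eqref{eq:mainsyst} for all $\rho\in\mathscr{P}_{\hspace{-1mm}{\scriptscriptstyle\geqslant\bar T}}$, which is the claim.

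The only genuinely non-routine point — and the one I would be most careful about — is the handling of the $\eps I$ slack terms: the matrix inequalities \eqref{eq:minDT:1}, \eqref{eq:minDT:2} in Theorem \ref{th:minDT} carry an explicit $+\eps I$, whereas in Box \ref{box} the $\eps I_n$ appears in the first, fourth and fifth bullets but not in the third. I expect this is simply an inessential notational asymmetry (one can always absorb a uniform $\eps$ by a slight decrease, using compactness of $[0,\bar T]\times\mathcal{P}\times\mathcal{D}^v$ and continuity to pass from a non-strict SOS certificate to the strict margin needed), but the clean fix in the write-up is to observe that since the third-bullet expression is SOS, $-\partial_\rho S\mu-\partial_\tau S-\He[SA]\succeq0$ on the feasible set; combined with the strict positivity margin available because $S\succeq\eps I_n$ and $A$ is continuous on the compact parameter set, one gets \eqref{eq:minDT:2} with a (possibly smaller) positive constant in place of $\eps$, which is all Theorem \ref{th:minDT} needs. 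Everything else is a direct, mechanical translation from SOS certificates to pointwise semidefiniteness.
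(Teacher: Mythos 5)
Your overall argument is the standard and surely intended one: the paper states Proposition \ref{prog:periodic} without a written proof, and the mechanical translation you describe --- evaluate each SOS certificate at a point of $[0,\bar T]\times\mathcal{P}\times\mathcal{P}\times\mathcal{D}^v$, drop the nonnegative multiplier terms $\sum_i\Gamma^i g_i$, $\Gamma\,\tau(\bar T-\tau)$, etc., and read off the pointwise matrix inequalities of Theorem \ref{th:minDT} --- is exactly how such relaxations are justified. Your treatment of the second, fourth and fifth bullets is correct, including the observations that positive definiteness of $S$, its continuous differentiability, and its boundedness on the compact set $[0,\bar T]\times\mathcal{P}$ come for free.

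The one place where your write-up would not survive scrutiny is the patch you propose for the missing $\eps I_n$ in the third bullet. You correctly notice that, as printed, that bullet only certifies $\partial_\tau S+\partial_\rho S\,\mu+\He[SA]\preceq 0$ on the feasible set, i.e.\ \eqref{eq:minDT:2} with $\eps=0$. But neither of your suggested rescues works: rescaling $S$ scales the whole left-hand side and creates no additive margin, and positive semidefiniteness of a continuous matrix function on a compact set does not imply a uniform positive-definite lower bound (the smallest eigenvalue can vanish identically), so the fact that $S\succeq\eps I_n$ tells you nothing about a margin in the flow-derivative inequality. The $\eps$ in \eqref{eq:minDT:2} is not cosmetic either: in the proof of Theorem \ref{th:minDT} it is what delivers the strict decrease $\langle\nabla V,f\rangle\le-\eps\|x\|_2^2$ for $\tau\in[0,\bar T)$ required by Proposition 3.27 of \cite{Goebel:12}; with $\eps=0$ there and dwell times equal to $\bar T$ one only obtains non-increase of $V$, which does not yield asymptotic stability. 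The correct resolution is the one you call the ``clean fix'': the term $-\eps I_n$ belongs in the third bullet of Box \ref{box} exactly as it appears in the fourth and fifth (its absence is a typographical slip), and with that term present your pointwise-evaluation argument goes through verbatim and the proposition follows by invoking Theorem \ref{th:minDT}.
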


\begin{remark}
  When the parameter set $\mathcal{P}$ is also defined by equality constraints $h_i(\theta)=0$, $i=1,\ldots, M'$, these constraints can be simply added in the sum of squares programs in the same way as the inequality constraints, but with the particularity that the corresponding multiplier matrices be simply symmetric instead of being SOS matrices. \bll{When the set of values for the derivative is not polyhedral, then the conditions need to be checked for all $\mu=\dot\rho\in\mathcal{Q}(\rho)$ or an approximation of it.}
\end{remark}

\subsection{Examples}\label{sec:examples}

\blue{We consider now two examples. The first one is a 2-dimensional toy example considered in \citep{Xie:97} whereas the second one is a 4-dimensional system considered in \citep{Wu:95} and inspired from an automatic flight control design problem. The numerical calculations have been performed using the package SOSTOOLS \citep{sostools3} and the semidefinite solver SeDuMi \citep{Sturm:01a} on a PC equipped with 12GB of RAM and a processor Intel i7-950 @ 3.07Ghz.}

\begin{example}
  Let us consider here the system \eqref{eq:mainsyst} with the matrices  $J(\rho)=I_n$ and \citep{Xie:97,Briat:15d}
\begin{equation}\label{eq:ex1}
  A(\rho)=\begin{bmatrix}
    0 & &1\\
    -2-\rho &\ &-1
  \end{bmatrix}
\end{equation}
where the time-varying parameter $\rho(t)$ takes values in $\mathcal{P}=[0,\bar{\rho}]$, $\bar{\rho}>0$. It is known \citep{Xie:97} that this system is quadratically stable if and only if $\bar{\rho}\le3.828$ but it is was later proven in the context of piecewise constant parameters \citep{Briat:15d} that this bound can be improved provided that discontinuities do not occur too often. We now apply the conditions of Theorem \ref{th:minDT} in order to characterize the impact of parameter variations between discontinuities. To this aim, we consider that $|\dot{\rho}(t)|\le\nu$ with $\nu\ge0$ and that $\bar{\rho}\in\{0,0.1,\ldots,10\}$. For each value for the parameter upper-bound $\bar{\rho}$ in that set, we solve for the conditions of Theorem \ref{th:minDT} to get estimates (i.e. upper-bounds) for the minimum dwell-times. We use here $\eps=0.01$ and polynomials of degree 4 in the sum of squares programs. Note that we have, in this case, $M=1$, $M'=0$ and $g_1(\rho)=\rho(\bar{\rho}-\rho)$. The complexity of the approach can be evaluated here through the number of primal and dual variables of the semidefinite program which are 2409 and 315, respectively. The average preprocessing and solving times are  given by 6.04sec and 1.25sec, respectively. The results are Fig.~\ref{fig:2} where we can see that the obtained minimum values for the dwell-times increase with the rate of variation $\nu$ of the parameter, which is an indicator of the fact that increasing the rate of variation of the parameter tends to destabilize the system and, consequently, the dwell-time needs to be increased in order to preserve the overall stability of the system.

\begin{figure}[h]
  \centering
  \includegraphics[width=0.8\textwidth]{./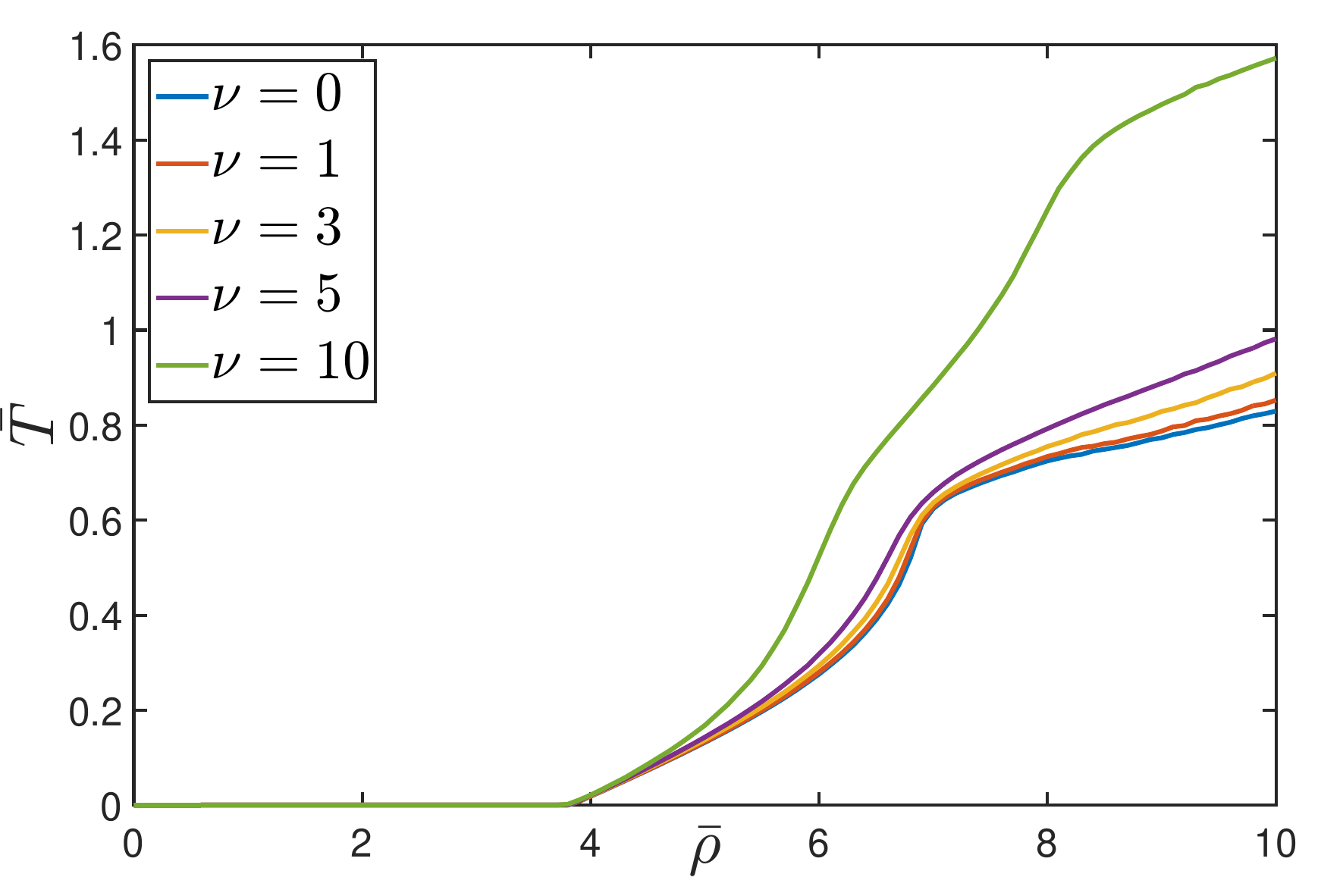}
  \caption{Evolution of the computed minimum upper-bound on the minimum stability-preserving minimum dwell-time using Theorem \ref{th:minDT} for the system \eqref{eq:mainsyst}-\eqref{eq:ex1} with $|\dot{\rho}|\le\nu$ using an SOS approach with polynomials of degree 4.}\label{fig:2}
\end{figure}
\end{example}

%\subsubsection{Example 2}

\begin{example}
  Let us consider now the system \eqref{eq:mainsyst} with the matrices $J(\rho)=I_n$ and \citep[p. 55]{Wu:95}:
  \begin{equation}\label{eq:ex2}
  A(\rho)=\begin{bmatrix}
   3/4 &\ & 2 &\ & \rho_1 &\ & \rho_2\\
    0 &\ & 1/2 &\ & -\rho_2 &\ & \rho_1\\
  -3\upsilon\rho_1/4   &\ & \upsilon\left(\rho_2-2\rho_1\right) &\ & -\upsilon &\ & 0\\
  -3\upsilon\rho_2/4    &\ & \upsilon\left(\rho_1-2\rho_2\right)  &\ & 0 &\ & -\upsilon
      \end{bmatrix}
\end{equation}
where $\upsilon=15/4$ and $\rho\in\mathcal{P}=\{z\in\mathbb{R}^2:||z||_2=1\}$. It has been shown in \citep{Wu:95} that this system is not quadratically stable but was proven to be stable under minimum dwell-time equal to 1.7605 when the parameter trajectories are piecewise constant \citep{Briat:15d}. We propose now to quantify the effects of smooth parameter variations between discontinuities. \bll{Note, however, that the set $\mathcal{P}$ is a circle, and hence Assumption \ref{hyp:param} does not hold. As mentioned multiple times in the main text, this is not an issue as SOS programming can easily handle such a case since a circle is described by a single polynomial equation. The difficulty lies more at the level of the parameter derivatives and, to resolve this, let us define the parametrization $\rho_1(t)=\cos(\beta(t))$ and $\rho_2(t)=\sin(\beta(t))$ where $\beta(t)$ is piecewise differentiable. Differentiating these equalities yields $\dot{\rho}_1(t)=-\dot{\beta}(t)\rho_2(t)$ and $\dot{\rho}_2(t)=\dot{\beta}(t)\rho_1(t)$ where $\dot{\beta}(t)\in[-\nu,\nu]$, $\nu\ge0$, at all times where $\beta(t)$ is differentiable. Therefore, we can substitute those exact expressions in the conditions of the theorems to obtain conditions that depend on the parameters, the timer variables and the term $\dot{\beta}$ which enters linearly and can be considered as an uncertainty $\dot{\beta}\in[-\nu,\nu]$. However, since this term enters linearly in the conditions, one just needs to check the conditions at the vertices of this interval; i.e. $\dot{\beta}\in\{-\nu,\nu\}$. Note that, in this case, we have $M=0$, $M'=1$ and $h_1(\rho)=\rho_1^2+\rho_2^2-1$ in the SOS program. It is worth mentioning that this approach dramatically simplifies the problem since we could have considered directly the derivatives of the parameters to belong to a set defined by the equality $\dot{\rho}_1(t)^2+\dot{\rho}_2(t)^2=\dot{\beta}(t)^2$ but this approach would have been way more conservative since it does not capture the explicit relationship between the parameters and their derivative.}

\begin{table*}
  \centering
    \caption{Evolution of the computed minimum upper-bound on the minimum dwell-time using Theorem \ref{th:minDT} for the system \eqref{eq:mainsyst}-\eqref{eq:ex2} with $|\dot{\beta}|\le\nu$ using an SOS approach with polynomials of degree $d$. The number of primal/dual variables of the semidefinite program and the preprocessing/solving time are also given.}\label{tab}
  \begin{tabular}{|c||c|c|c|c|c||c|c|c|}
  \hline
   & $\nu=0$ & $\nu=0.1$ & $\nu=0.3$ & $\nu=0.5$ & $\nu=0.8$ & $\nu=0.9$ & primal/dual vars. & time (sec)\\
  \hline
  \hline
    $d=2$  & 2.7282 & 2.9494 & 3.5578 & 4.6317 & 11.6859 & 26.1883 & 9820/1850 & 20/27\\
    $d=4$  & 1.7605 & 1.8881 & 2.2561 & 2.9466 & 6.4539 & \red{num. err.} & 43300/4620 & 212/935\\
    \hline
  \end{tabular}
\end{table*}
We now consider the conditions of Theorem \ref{th:minDT} and we get the results gathered in Table \ref{tab} where we can see that, as expected, when $\nu$ increases then the minimum dwell-time has to increase to preserve stability. Using polynomials of higher degree allows to improve the numerical results at the expense of an increase of the computational complexity. As a final comment, it seems important to point out the failure of the semidefinite solver due to too important numerical errors when $d=4$ and $\nu=0.9$.
\end{example}

\section{Stabilization using hybrid state-feedback LPV controllers}\label{sec:stabz1}

We consider in this section the following extension for the system \eqref{eq:mainsyst}
\begin{equation}\label{eq:mainsyst2}
\begin{array}{rcl}
    \dot{x}(t)&=&A(t-t_k,\rho(t))x(t)+B_c(t-t_k,\rho(t))u_c(t),\ t\in(t_k,t_{k+1}],k\in\mathbb{Z}_{\ge0}\\[0.1em]
    x(t_k^+)&=&J(\rho(t_k))x(t_k)+B_d(\rho(t_k))u_d(k),\ k\in\mathbb{Z}_{>0}\\[0.1em]
    x(t_0^+)&=&x(t_0)=x_0,\ t_0=0
\end{array}
\end{equation}
where $x,x_0\in\mathbb{R}^n$, $u_c\in\mathbb{R}^{m_c}$ and $u_d\in\mathbb{R}^{m_d}$ are the state of the system, the initial condition, the continuous-time control input and the discrete-time control input, respectively. The same assumptions as for the system \eqref{eq:mainsyst} are made for the above system that is:
\bll{\begin{assumption}\label{hyp:AB}
  The matrices of the system \eqref{eq:mainsyst2} are such that $A(\bar{T}+s,\rho(\bar T+s))=A(\bar T,\rho(\bar T+s))$ and $B_c(\bar{T}+s,\rho(\bar T+s))=B_c(\bar T,\rho(\bar T+s))$  for all $s\ge0$ and for all $\rho(\bar T+s)\in\mathcal{P}$.
\end{assumption}}

\subsection{Stabilization by timer-dependent controllers}\label{sec:stabz:minDT}

We consider in this section the following class of state-feedback controllers
\begin{equation}\label{eq:K_MDT}
\begin{array}{rcl}
  u_c(t_k+\tau)&=&\left\{\begin{array}{ll}
    K_c(\tau,\rho(t_k+\tau))x(t_k+\tau),& \tau\in[0,\bar T],\\[0.1em]
    K_c(\bar T,\rho(t_k+\tau))x(t_k+\tau),& \tau\in(\bar T,T_k]
    \end{array}\right.,\\[1em]
  u_d(k)&=&K_d(\rho(t_k))x(t_k)
\end{array}
\end{equation}
where $K_c(\cdot,\cdot)\in\mathbb{R}^{m_c\times n}$ and $K_d(\cdot)\in\mathbb{R}^{m_d\times n}$ are the gains of the controller we would like to determine. The motivation for considering this structure stems from the fact that it mimics the structure of the conditions in Theorem \ref{th:minDT} where the condition \eqref{eq:minDT:2} defined over $[0,\bar T]$ is timer-dependent and the condition \eqref{eq:minDT:1} for dwell-times $\tau\ge\bar T$ is timer-independent for which the value is locked to $\bar T$. Such a structure has been considered before, for instance, in \cite{Allerhand:11,Briat:13d,Briat:14f,Briat:15i} in the context of switched and (stochastic) impulsive systems, respectively.

As the constant dwell-time case can be easily recovered by dropping the timer-independent condition in the minimum dwell-time conditions, only the latter one is addressed for conciseness:
\begin{theorem}[Minimum dwell-time]\label{th:minDT_CT}
\bll{Assume that the matrices of the system \eqref{eq:mainsyst2} satisfy Assumption \ref{hyp:AB}, and let $\bar T\in\mathbb{R}_{>0}$ be given}. Assume further that there exist a continuously differentiable matrix-valued function $X:[0, \bar{T}]\times\mathcal{P}\mapsto \mathbb{S}^n$, $X(\bar T,\rho)\in\mathbb{S}^n_{\succ0}$,  $\rho\in\mathcal{P}$, a matrix-valued function $U_c:[0, \bar{T}]\times\mathcal{P}\mapsto \mathbb{R}^{m_c\times n}$,  a matrix-valued function $U_d:\mathcal{P}\mapsto \mathbb{R}^{m_d\times n}$, and a scalar $\eps>0$ such that the conditions
     \begin{equation}\label{eq:stabzCT1}
       -\sum_{i=1}^N\partial_{\rho_i} X(\bar T,\rho)\mu_i+\He[A(\bar{T},\rho)X(\bar{T},\rho)+B_c(\bar{T},\rho)U_c(\bar{T},\rho)]+\eps I_n\preceq0,
    \end{equation}
       \begin{equation}\label{eq:stabzCT2}
        -\partial_\tau X(\tau,\rho)-\sum_{i=1}^N\partial_{\rho_i} X(\tau,\rho)\mu_i+\He[A(\tau, \rho)X(\tau,\rho)+B_c(\tau,\rho)U_c(\tau,\rho)]+\eps I_n\preceq0
        \end{equation}
    and
     \begin{equation}\label{eq:stabzCT3}
     \begin{bmatrix}
       -X(0,\rho^+) & [J(\rho)X(\bar{T},\rho)+B_d(\rho)U_d(\rho)]^{\T}\\
       \star & -X(\bar{T},\rho)
     \end{bmatrix}\preceq0
    \end{equation}
    hold for all $(\tau,\rho,\rho^+,\mu)\in[0, \bar{T}]\times\mathcal{P}\times\mathcal{P}\times \mathcal{D}^v$. Then, the LPV system \eqref{eq:mainsyst2}-\eqref{eq:K_MDT} with parameter trajectories in $\mathscr{P}_{\hspace{-1mm}{\scriptscriptstyle\geqslant \bar{T}}}$ is uniformly exponentially stable with the controller gains $K_c(\tau,\rho)=U_c(\tau,\rho)X(\tau,\rho)^{-1}$ and $K_d(\rho)=U_d(\rho)X(\bar T,\rho)^{-1}$.\hfill\mendth
\end{theorem}
\begin{proof}
The state matrix of the continuous-time part of the closed-loop system \eqref{eq:mainsyst2}-\eqref{eq:K_MDT} is given by
  \begin{equation}\label{eq:clglab}
A_{cl}(\tau,\rho)=\left\{\begin{array}{ll}
    A(\rho)+B_c(\rho)K_c(\tau,\rho)&, \tau\in[0,\bar T]\\
    A(\rho)+B_c(\rho)K_c(\bar T,\rho)&, \tau\in(\bar T,T_k],
\end{array}\right.
\end{equation}
while the state-matrix of the discrete-time part is given by $J_{cl}(\rho):=J(\rho)+B_d(\rho)K_d(\rho)$. \bll{Note also that the matrix $P(\tau,\rho)$ is invertible since it is positive definite as the function $x^{\T}P(\tau,\rho)x$ is positive definite; see e.g. the proof of Theorem \ref{th:cstDT}.}  Then, substituting this matrix in the conditions of Theorem \ref{th:minDT} and performing a congruence transformation with respect to the matrix $X(\tau,\rho)=P(\tau,\rho)^{-1}$ yields the conditions \eqref{eq:stabzCT1}-\eqref{eq:stabzCT2} where we have used the change of variables  $U_c(\tau,\rho)=K_c(\tau,\rho)X(\tau,\rho)$ and the facts that $-\partial_\tau X(\tau,\rho)=X(\tau,\rho)\partial_\tau P(\tau,\rho)X(\tau,\rho)$ and $-\partial_{\rho_i} X(\tau,\rho)=X(\tau,\rho)\partial_{\rho_i} P(\tau,\rho)X(\tau,\rho)$. Finally, the condition \eqref{eq:stabzCT3} is obtained from \eqref{eq:minDT:3} using successively a Schur complement, a congruence transformation with respect to $\diag(X(0,\rho),I)$ and the change of variables $U_d(\rho)=K_d(\rho)X(\bar{T},\rho)$. This proves the desired result.
\end{proof}

As for the previously obtained results, the conditions in Theorem \ref{th:minDT_CT} can be checked using convex SOS programming since the conditions are convex in the decision variables $X$, $U_c$ and $U_d$. We can also see that the chosen structure for the continuous-time controller matches perfectly the stability conditions and this matching allows us to use elementary and well-known linearizing change of variables. Should we have chosen a different structure, finding a linearizing change of variables would have been way more challenging. A difficulty that may arise when implementing timer-dependent controllers is that the initial timer value may be unknown and there my be a mismatch between the actual timer value and the implemented one. However, this will only result in an unstable initial transient phase of finite duration but which will not affect the long-term stability properties of the system as the system is linear. Another difficulty with the use of timer-dependent controllers is that they cannot be easily discretized, especially when the sensitivity of the gain with respect to the timer is high. As the timer has unit-rate, we can solve this problem by choosing a sufficiently small sampling period and/or a small minimum dwell-time value for the design. Other solutions, addressed in the next section and in Section \ref{sec:stabz2}, consist of considering timer-independent and sampled-data controllers, respectively.

\subsection{Stabilization by timer-independent controllers}

\bl{Due to the possible difficulty of implementing timer-dependent controllers, we suggest to design timer-independent ones taking the form
\begin{equation}\label{eq:K_MDT2}
\begin{array}{rcl}
  u_c(t)&=&K_c(\rho(t))x(t),\ t\ne t_k,k\ge0\\
  u_d(k)&=&K_d(\rho(t_k))x(t_k),\ k\ge1
\end{array}
\end{equation}
where $K_c(\cdot)\in\mathbb{R}^{m_c\times n}$ and $K_d(\cdot)\in\mathbb{R}^{m_d\times n}$ are the gains of the controllers. Even though the structure of this controller is simpler than that of the timer-dependent controller, its design is more cumbersome as its structure is not adapted to that of the clock-dependent stability conditions. Indeed, the difficulty in the design of such a controller lies in the fact that, in Theorem \ref{th:minDT_CT}, the gain $K_c$ of the controller is obtained using the expression $K_c(\tau,\rho)=U_c(\tau,\rho)X(\tau,\rho)^{-1}$. It is immediate to see that a timer-independent controller can be obtained by setting both $U_c(\tau,\rho)$ and $X(\tau,\rho)$ to be timer-independent. However, making this simplification would not result in a stability condition under minimum dwell-time but in a stability result under arbitrary dwell-time, which is unfeasible in most scenarios. In this regard, it is not reasonable to set the Lyapunov function to be timer-independent. Adding the constraint that $\partial\tau K_c(\tau,\rho)=0$ for all $\rho\in\mathcal{P}$ is definitely another interesting option but this constraint is clearly nonconvex and, therefore, difficult to consider. To the best of the author's knowledge, this has never been considered. A rather well-known solution to this problem is the transfer of the timer-independent constraint to another decision variable than the Lyapunov matrix. This can be possibly achieved through the use of so-called slack-variables \cite{Geromel:98,Oliveira:99,Tuan:03,Wu:10,Briat:book1,Ebihara:15}, which leads to the following result:
\begin{theorem}[Minimum dwell-time - Timer-independent conditions]\label{th:minDT_CTrelax}
\bll{Assume that the matrices of the system \eqref{eq:mainsyst2} satisfy Assumption \ref{hyp:AB}, and let $\bar T\in\mathbb{R}_{>0}$ be given}. Assume further that there exist a continuously differentiable matrix-valued function $Q:[0, \bar{T}]\times\mathcal{P}\mapsto \mathbb{S}^n$, $Q(\bar T,\rho)\in\mathbb{S}^n_{\succ0}$, $\rho\in\mathcal{P}$,  a matrix-valued function $U_c:\mathcal{P}\mapsto \mathbb{R}^{m_c\times n}$,  a matrix-valued function $U_d:\mathcal{P}\mapsto \mathbb{R}^{m_d\times n}$, and a scalar $\eps>0$ such that the conditions
  \begin{equation}\label{eq:minDT:2relax:stabz}
       \begin{bmatrix}
         -(Y+Y^{\T}) & A(\tau,\rho)Y+B_c(\tau,\rho)U_c(\rho)-Y^{\T}+Q(\tau,\rho)\\
         \star & \partial_\tau X(\tau,\rho)+\sum_{i=1}^N\partial_{\rho_i} X(\tau,\rho)\mu_i+\He[A(\tau,\rho)Y+B_c(\tau,\rho)U_c(\rho)+\eps I]
       \end{bmatrix}\preceq0
    \end{equation}
     \begin{equation}\label{eq:minDT:1relax:stabz}
     \begin{bmatrix}
         -(Y+Y^{\T}) & A(\bar{T},\rho)Y+B_c(\bar{T},\rho)U_c(\rho)-Y^{\T}+Q(\tau,\rho)\\
         \star & \sum_{i=1}^N\partial_{\rho_i}X(\bar{T},\rho)\mu_i+\He[A(\bar{T},\rho)Y+B_c(\bar{T},\rho)K(\rho)]+\eps I
       \end{bmatrix}\preceq0
    \end{equation}
    and
     \begin{equation}\label{eq:minDT:3relax:stabz}
     \begin{bmatrix}
       Q(0,\rho^+)-(Y+Y^{\T}) & J(\rho)Y+B_d(\rho)U_d(\rho)-Y^{\T}\\
        \star & -Q(\bar{T},\rho)+\He[J(\rho)Y+B_d(\rho)U_d(\rho)]
     \end{bmatrix}\preceq0
       \end{equation}
    hold for all $(\tau,\rho,\rho^+,\mu)\in[0, \bar{T}]\times\mathcal{P}\times\mathcal{P}\times \mathcal{D}^v$.  Then, the LPV system \eqref{eq:mainsyst2}-\eqref{eq:K_MDT2} with parameter trajectories in $\mathscr{P}_{\hspace{-1mm}{\scriptscriptstyle\geqslant \bar{T}}}$ is uniformly exponentially stable with the controller gains $K_c(\rho)=U_c(\rho)Y^{-1}$ and $K_d(\rho)=U_d(\rho)Y^{-1}$.
\hfill\mendth
\end{theorem}
\begin{proof}
  The proof is based on the substitution of the matrices of the closed-loop system in the conditions of Theorem \ref{th:minDTrelax} with the simplification $X_1=X_2=Z_1=Z_2=X$  for some constant matrix $X\in\mathbb{R}^{n\times n}$. Performing a congruence transformation on the conditions with respect to $\diag(Y,Y)$ where $Y=X^{-1}$ and letting $U_c(\rho)=K_c(\rho)Y$, $U_d(\rho)=K_d(\rho)Y$, and $Q(\cdot,\cdot)=Y^{\T}P(\cdot,\cdot)Y$  yield the conditions.
\end{proof}

It is worth mentioning that the choice that $X_1=X_2=Z_1=Z_2=X$ allows us to obtain convex design conditions at the expense of some conservatism. Other possibilities exist but would introduce extra scalar parameters to be tuned manually, which would have resulted in a more complicated design procedure. However, we will see in the example of the next section that the above theorem may still yield interesting results. It is also worth mentioning that other constraints can be enforced such as a block-structured controller can be obtained by enforcing the matrix $X$ to have the same block-structure (which should be stable by inversion). A simple example is the case of block diagonal controller where $U_c$ and $X$ are set to be block diagonal. Robust (i.e. parameter independent) controllers can also be designed using this method by restricting the matrices $U_c$ and $U_d$ to be constant matrices.}

\subsection{Example}

Let us consider back the example from \citep{Briat:15d}
\begin{equation}\label{eq:systex_stabz}
  \dot{x}=\begin{bmatrix}
    3-\rho & 1\\1-\rho &\ \ \ 2+\rho
  \end{bmatrix}x+\begin{bmatrix}
    1\\
    1+\rho
  \end{bmatrix}u_c,\ J=I_n
\end{equation}
where $\mathcal{P}=[0,\rho_{\textnormal{max}}]$, $\rho_{\textnormal{max}}=1$, and $\mathcal{D}=[-\nu,\nu]$. It was proved in \citep{Briat:15d} that this system cannot be stabilized quadratically. This latter property makes it a perfect example to illustrate the proposed approach since neither quadratic nor robust stabilization results can be used here. Applying then Theorem \ref{th:minDT_CT} with $\bar T=0.05$, we find that the conditions are feasible for $\nu\in\{0,0.1,0.3\}$ for $d=2$ and $\nu\in\{0.5,0.7,0.9,1,2\}$ for $d=3$. When $d=2$ the number of primal/dual variables is 834/180 whereas, when $d=3$, this number is 2414/315. Finally, when $d=2$, it takes roughly 2.62sec to solve the problem whereas, in the case $d=3$, it takes around 6.31sec. %In the case, $d=2$, $\rmax=3/5$, and $\nu=1$, we get the solutions
For simulation purposes, we consider the  parameter trajectory
\begin{equation}\label{eq:param_phase}
  \rho(t_k+\tau)=\dfrac{\rho_{\textnormal{max}}}{2}\left[1+\sin\left(\dfrac{2\nu (t_k+\tau)}{\rho_{\textnormal{max}}}+\varphi_k\right)\right],\ k\in\mathbb{Z}_{\ge0}
\end{equation}
%o
where $\varphi_k$ is a uniform random variable taking values in $[0,2\pi]$ and we generate a random sequence of instants satisfying the minimum dwell-time condition. At each of these time instants, we draw a new value for the random variable  $\varphi_k$, which introduces a discontinuity in the trajectory. Note, however, that between discontinuities we have that $|\dot{\rho}(t_k+\tau)|\le\nu$ for all $\tau\in(0,T_k]$, $k\in\mathbb{Z}_{\ge0}$. We then obtain the results depicted in Fig.  \ref{fig:mySystemCT} for the case $d=3$, $\rmax=1$, and $\nu=1$ where we can see that stabilization is indeed achieved for this system.

\bl{We now use Theorem \ref{th:minDT_CTrelax} for the design of a timer-independent controller. Due to the conservatism of the relaxed conditions, it is not possible to stabilize the system for such a broad range of constraints as in the timer-dependent case but it is possible to find a stabilizing controller in the case where $\bar T=0.05$, $d=2$, $\nu=1$, and $\rho_{\textnormal{max}}=0.6$. Note that the LMI constraint \eqref{eq:minDT:3relax} is changed here to $\tilde{S}(0,\rho^+)-\tilde{S}(\bar{T},\rho)\preceq0$ here since the state does not jump. The obtained matrices are given
\begin{equation}\label{eq:controller_CT_Relax1}
  Y=\begin{bmatrix}
   10.028 & 30.292\\
   15.114 & 52.193
  \end{bmatrix}, K(\rho)=\begin{bmatrix}
    -19.455-8.4526\rho-19.131\rho^2\\
      7.7740+3.9694\rho+9.1796\rho^2
  \end{bmatrix}^{\T}, X(\tau,\rho)=[X_{ij}(\tau,\rho)]_{i,j=1,2}
\end{equation}
where
\begin{equation}
  \begin{array}{rcl}
    X_{11}(\tau,\rho)&=&  29.962+13.055\tau+0.43986\rho+0.34533\tau^2-19.903\tau\rho+0.044874\rho^2\\
    X_{12}(\tau,\rho)&=& 44.005+15.763\tau+0.62755\rho-0.66049\tau^2-24.719\tau\rho+0.10091\rho^2\\
    X_{22}(\tau,\rho)&=& 81.148+19.004\tau+0.59441\rho-0.072018\tau^2-22.198\tau\rho+0.31172\rho^2.
  \end{array}
\end{equation}
The simulation results are depicted in Figure \ref{fig:mySystemCT_Relax1} where we can observe that the controller stabilizes the system but that it takes longer to reach equilibrium. Note also that the fact that we can find a timer-independent controller may be due to the fact that the minimum dwell-time is small and, hence, the range of values for $\tau$ is small and, as a result, we can approximate fairly well a timer-dependent controller by a timer-independent one. The following case shows that the problem remains feasible for larger dwell-times. Indeed, the problem is still solvable with $\bar{T}=1$, $d=2$, $\nu=1$, $\rho_{\textnormal{max}}=0.6$, and the matrices

\begin{equation}\label{eq:controller_CT_Relax2}
  Y=\begin{bmatrix}
   7.6849& 23.045\\
   11.582 &39.644
  \end{bmatrix}, K(\rho)=\begin{bmatrix}
 -19.233-10.112\rho-15.726\rho^2\\
  7.6689+4.8734\rho+7.3227\rho^2
  \end{bmatrix}^{\T}, X(\tau,\rho)=[X_{ij}(\tau,\rho)]_{ij}
\end{equation}
where
\begin{equation}
  \begin{array}{rcl}
    X_{11}(\tau,\rho)&=&  22.041+1.5769\tau+0.70604\rho+0.20096\tau^2-0.97129\tau\rho-0.30546\rho^2\\
    X_{12}(\tau,\rho)&=& 32.532+2.0088\tau+1.1276\rho+0.2084\tau^2-1.1802\tau\rho+0.57069\rho^2\\
    X_{22}(\tau,\rho)&=& 60.359+2.8099\tau+1.6351\rho+0.1849\tau^2-1.3145\tau\rho+1.9863\rho^2.
  \end{array}
\end{equation}
The simulation results are depicted in Figure \ref{fig:mySystemCT_Relax2} where we can observe that the controller indeed stabilizes the system.}

 \begin{figure}
    \centering
    \includegraphics[width=0.8\textwidth]{./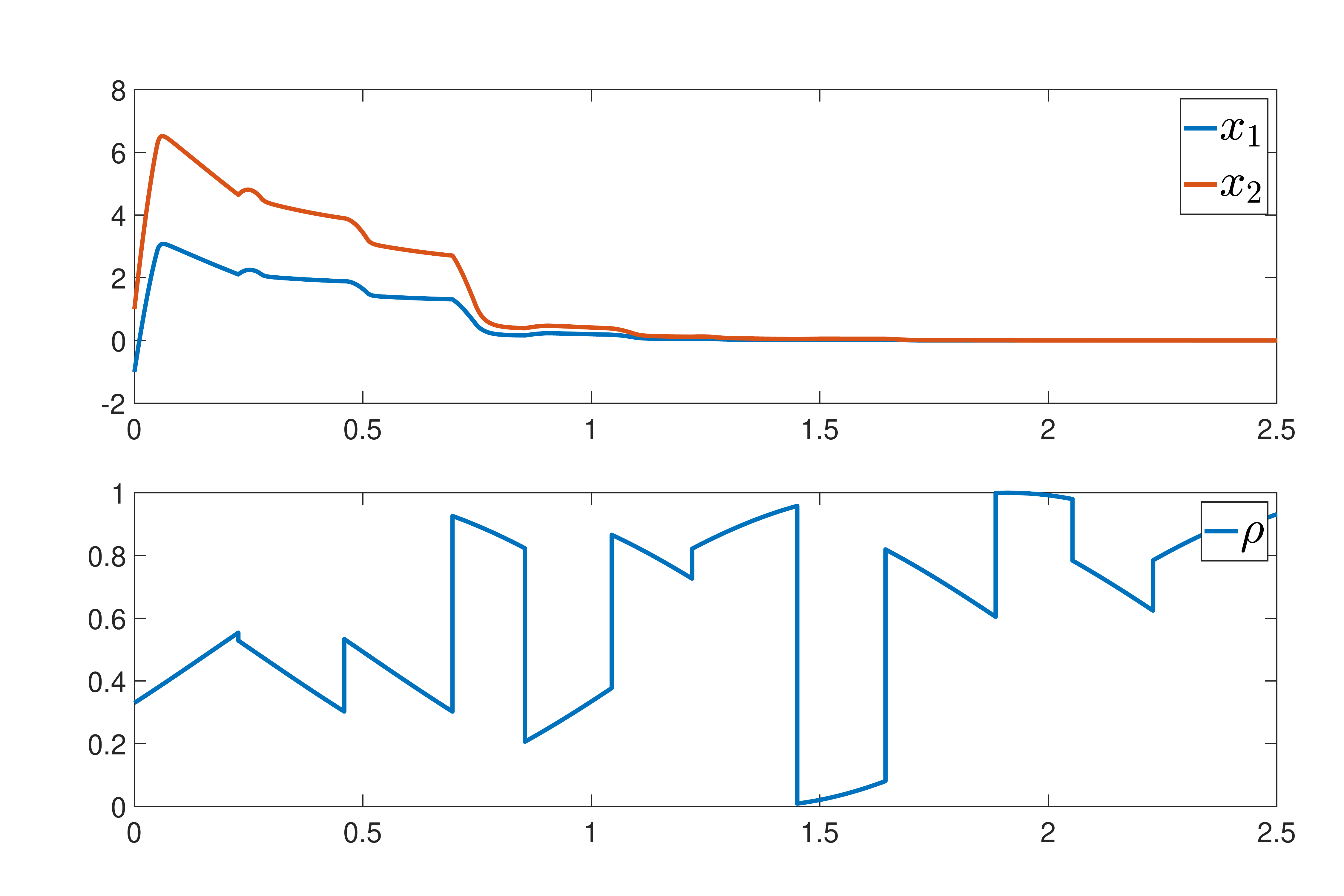}
    \caption{Trajectories of the closed-loop system \eqref{eq:systex_stabz}-\eqref{eq:K_MDT}-\eqref{eq:param_phase} with $\nu=1$, $\rmax=1$ and $\bar{T}=0.05$.}\label{fig:mySystemCT}
  \end{figure}

 \begin{figure}
    \centering
    \includegraphics[width=0.8\textwidth]{./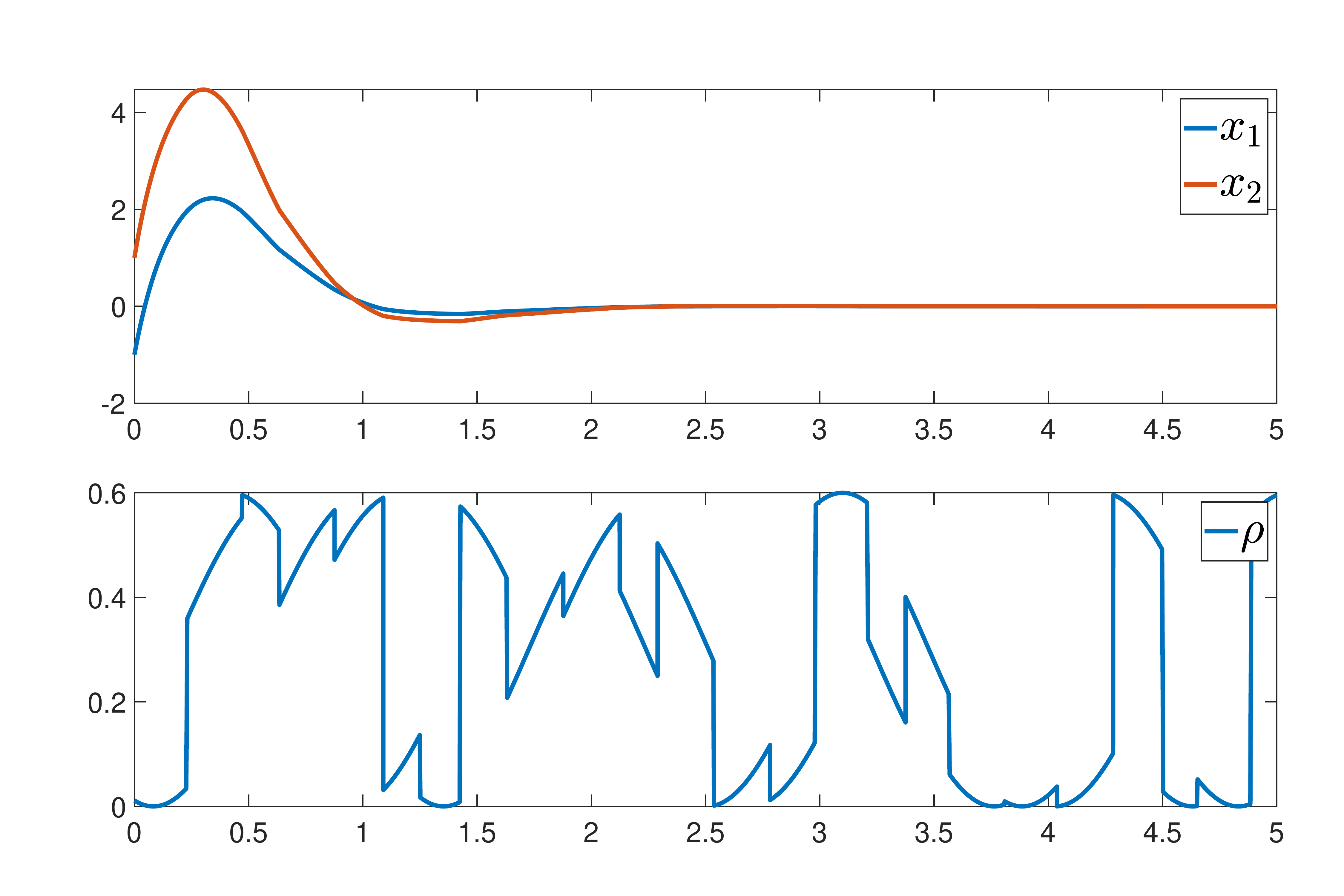}
    \caption{Trajectories of the closed-loop system \eqref{eq:systex_stabz}-\eqref{eq:K_MDT}-\eqref{eq:param_phase}-\eqref{eq:controller_CT_Relax1} with $\nu=5/3$ and $\rmax=3/5$ and  $\bar{T}=0.05$.}\label{fig:mySystemCT_Relax1}
  \end{figure}

 \begin{figure}
    \centering
    \includegraphics[width=0.8\textwidth]{./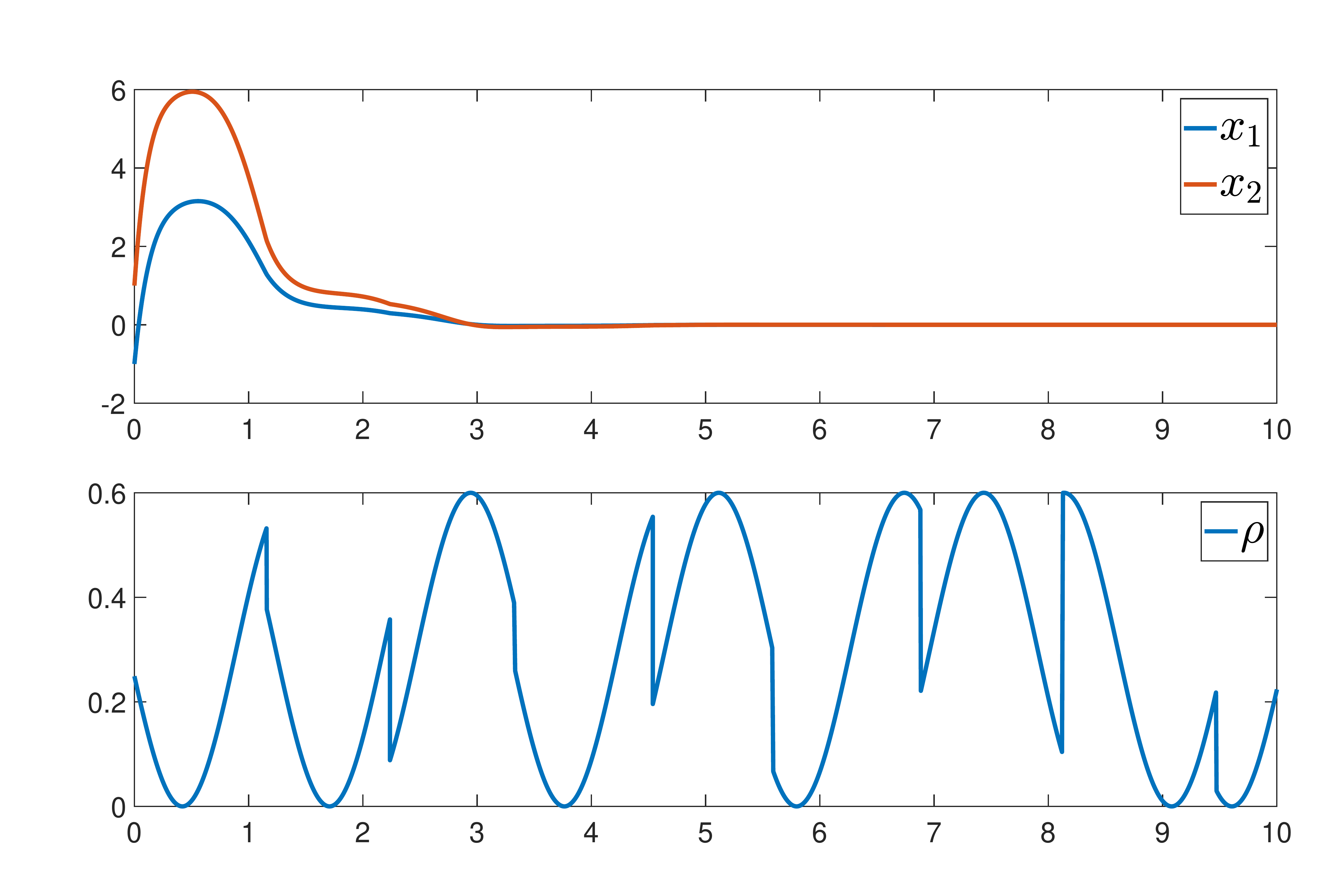}
     \caption{Trajectories of the closed-loop system \eqref{eq:systex_stabz}-\eqref{eq:K_MDT}-\eqref{eq:param_phase}-\eqref{eq:controller_CT_Relax2} with $\nu=5/3$ and $\rmax=3/5$ and  $\bar{T}=1$.}\label{fig:mySystemCT_Relax2}
  \end{figure}

%\begin{figure}[h]
%  \centering
%%  \includegraphics[width=0.45\textwidth]{./states.pdf}
%  \caption{Evolution of the states of the closed-loop system \eqref{eq:systex}-\eqref{eq:sf2}-\eqref{eq:sfex1}-\eqref{eq:sfex2}.}\label{fig:1}
%\end{figure}
%
%\begin{figure}[h]
%  \centering
% % \includegraphics[width=0.45\textwidth]{./parameter.pdf}
%    \caption{Evolution of the parameter of the closed-loop system \eqref{eq:systex}-\eqref{eq:sf2}-\eqref{eq:sfex1}-\eqref{eq:sfex2}.}\label{fig:3}
%\end{figure}

\section{Range dwell-time stability analysis and stabilization using sampled-data LPV controllers}\label{sec:stabz2}

\bll{The objective of this section is the development of a stability condition under a range dwell-time constraint and its use in the design of LPV sampled-data controllers. Examples are also given for illustration}

\subsection{Range dwell-time stability analysis}
We consider in this section the following class of parameter trajectories:
\begin{equation}\label{eq:rangeDTparam}
 \mathscr{P}_{\hspace{-1mm}{\scriptscriptstyle[\Tmin,\Tmax]}}:=\left\{\begin{array}{c}
    \rho:\mathbb{R}_{\ge0}\mapsto \mathcal{P}\left|\begin{array}{c}
     \dot{\rho}(t)\in\mathcal{Q}(\rho(t)),t\in(t_k,t_{k+1}],T_k\in[\Tmin,\Tmax]\\
    t_0=0,\rho(t_0)=\rho(t_0^+),k\in\mathbb{Z}_{\ge0}
    \end{array}\right.\end{array}\right\}
\end{equation}
where $T_k:=t_{k+1}-t_k$ and $0<\Tmin\le\Tmax$. With this definition in mind, we can now state the stability result under range dwell-time:
\begin{theorem}[Range dwell-time]\label{th:rangeDT_DT}
\bll{Assume that the parameter trajectories satisfy Assumption \ref{hyp:param} and let $0<\Tmin \le \Tmax <\infty$ be given.} Assume further that there exist a continuously differentiable matrix-valued function $P:[0, \Tmax ]\times\mathcal{P}\mapsto \mathbb{S}^n$, $P(0,\rho)\succ0$, $\rho\in\mathcal{P}$, and a scalar $\eps>0$ such that the conditions %$P:[0, \Tmax ]\times\mathcal{P}\mapsto \mathbb{S}^n_{\succ0}$ and a scalar $\eps>0$ such that the conditions
     \begin{equation}\label{eq:rdt:1}
      -\partial_{\varsigma} P(\varsigma,\rho)+\sum_{i=1}^N\partial_{\rho_i} P(\varsigma,\rho)\mu_i+\He[P(\varsigma,\rho)A(\rho)]\preceq0
    \end{equation}
    and
     \begin{equation}\label{eq:rdt:2}
      J(\rho)P(\xi,\rho^+)J(\rho)-P(0,\rho)+\eps I_n\preceq0
    \end{equation}
    hold for all $(\rho,\rho^+)\in\mathcal{P}\times\mathcal{P}$, all $\mu\in\mathcal{D}^v$, all $\varsigma\in[0, \Tmax ]$ and all $\xi\in[\Tmin ,\Tmax ]$. Then, the impulsive LPV system \eqref{eq:mainsyst} with parameter trajectories in $\mathscr{P}_{\hspace{-1mm}{\scriptscriptstyle[\Tmin,\Tmax]}}$ is uniformly exponentially stable under the range dwell-time constraint $[\Tmin ,\Tmax ]$.

      \bll{Moreover, when the above conditions hold, then there exists a matrix-valued function $Q:\mathcal{P}\mapsto\mathbb{S}^n_{\succ0}$ such that
    \begin{equation}
            M^{\T}Q(\rho^+)M-Q(\rho)\prec0
    \end{equation}
holds for all $\textstyle M\in\bigcup_{T\in[\Tmin,\Tmax]}\mathcal{M}(T,\rho)$ and all $(\rho,\rho^+)\in\mathcal{P}\times\mathcal{P}$. and the pre-jump embedded discrete-time system \eqref{eq:mainsystDT1} is uniformly exponentially stable.}\hfill\mendth
\end{theorem}
\begin{proof}
The proof of this result follows the same lines as the proof of Theorem \ref{th:minDT} with the difference that we consider here the Lyapunov function
\begin{equation}
  V(x,\tau,\rho,T)=x^{\T}P(T-\tau,\rho)x
\end{equation}
where $P(\cdot,\cdot)$ satisfies the conditions of Theorem \ref{th:rangeDT_DT}. Pre- and post-multiplying \eqref{eq:rdt:1} by $x(t_k+\tau)^{\T}$ and $x(t_k+\tau)^{\T}$ and letting $\varsigma=T_k-\tau$, $\rho=\rho(t_k+\tau)$,  $\mu=\dot{\rho}(t_k+\tau)$,  and integrating from 0 to $T_k$ yields
\begin{equation}
 V(x(t_{k+1}),T_k,\rho(t_{k+1}),T_k)-V(x(t_{k}^+),0,\rho(t_{k}^+),T_k)\le0
\end{equation}
or, equivalently,
\begin{equation}
 x(t_{k})^{\T}J(\rho(t_k))^{\T}\left[\Phi_\rho(t_{k+1},t_k)^{\T}P(0,\rho(t_{k+1}))\Phi_\rho(t_{k+1},t_k)-P(T_k,\rho(t_{k}^+))\right]J(\rho(t_k))x(t_{k})\le0
\end{equation}
which together with \eqref{eq:rdt:2} implies that with
\bll{\begin{equation}
 %x(t_{k})^{\T}\left[J(\rho(t_k))^{\T}\Phi_\rho(t_{k+1},t_k)^{\T}P(0,\rho(t_{k+1}))\Phi_\rho(t_{k+1},t_k)J(\rho(t_k))-P(0,\rho(t_k))\right]x(t_{k})\le-\eps||x(t_k)||_2^2
 x(t_{k})^{\T}\left[M_k^{\T}P(0,\rho(t_{k+1}))M_k-P(0,\rho(t_k))\right]x(t_{k})\le-\eps||x(t_k)||_2^2
\end{equation}
holds for all $x(t_{k})\in\mathbb{R}^n$, all $\textstyle M_k\in\bigcup_{T\in[\Tmin,\Tmax]}\mathcal{M}(T,\rho(t_k))$, $(\rho(t_k),\rho(t_{k+1}))\in\mathcal{P}\times\mathcal{P}$. This proves the existence of a matrix-valued function $Q:\mathcal{P}\mapsto\mathbb{S}^n_{\succ0}$ that verifies the concluding statement of the theorem and that the embedded discrete-time system \eqref{eq:mainsystDT1} is uniformly exponentially stable under range dwell-time $[\Tmin ,\Tmax ]$. Using the exact same arguments as in the proof of the previous results, it can be shown that this also implies that the LPV system \eqref{eq:mainsyst} with parameter trajectories in $\mathscr{P}_{\hspace{-1mm}{\scriptscriptstyle[\Tmin,\Tmax]}}$ following the same arguments as in the previous results. The proof is completed.}
\end{proof}

It seems important to mention the presence of the negative term in front of the timer-derivative. This comes from the fact that we consider here the Lyapunov function  $V(x,\tau,\rho,T)=x^{\T}P(T-\tau,\rho)x$, which introduces a negative sign when deriving it by $\tau$. It is interesting to note that the timer variable still measures the time elapsed since the last jump but that the Lyapunov matrix depends on the remaining time $T_k-\tau$ until the next jump. This is different from the constant and minimum dwell-time stability and stabilization results where the Lyapunov matrices directly depended on the time elapsed since the last jump. Using such a "reverse timer condition" will allow us to obtain stabilization conditions using a dwell-time-independent sampled-data controller without the need for using relaxed conditions and slack variables. \bll{This will be further explained in the next section.}

\subsection{Sampled-data stabilization of LPV systems}

\bll{We consider in this section the following class of sampled-data gain-scheduled state-feedback controllers
\begin{equation}\label{eq:K_MDT3}
u(t_k+\tau)=K_1(\rho(t_k))x(t_k)+K_2(\rho(t_k))u(t_k)
\end{equation}
where $\tau\in(0,T_k]$, $T_k\in[\Tmin ,\Tmax ]$ and where $K_1(\cdot)\in\mathbb{R}^{m\times n}$ and $K_2(\cdot)\in\mathbb{R}^{m\times m}$ are the gains of the controller to be determined. This controller is a bit more general than those usually considered in the literature \cite{Ramezanifar:12,GomesdaSilva:15,Gomes:18} due to the presence of the extra term $K_2$, which makes the control law a filtered one and may lead to less conservative results than a non-filtered controller having $K_2\equiv0$, at the price of a slight increase of the computational and the implementation complexity. As the control input involves the parameter $\rho(t_k)$ over the interval $(t_k,t_{k+1}]$, this controller is irrelevant to consider in the case of discontinuous parameter trajectories since, in such a case, the system would evolve according to a parameter trajectory starting at $\rho(t_k^+)$ , which is independent of $\rho(t_k)$, over the same interval. Note, however, that if the controller matrix $K$ would depend on both $\rho$ and $\rho^+$, the consideration of discontinuous parameter trajectories would make perfect sense. Unfortunately, the current setting is that parameter values are only measured at the time instants $t_k$ and the value $\rho(t_k^+)$ is inaccessible to the controller, so this option is ruled out. This mismatch leads us to consider the family of parameter trajectories
\begin{equation}
  \mathscr{P}_{\hspace{-2pt}\infty}:=\left\{
    \rho:\mathbb{R}_{\ge0}\mapsto \mathcal{P}\ \left|\
      \dot{\rho}(t)\in\mathcal{Q}(\rho(t)),\ t\ge0
    \right.\right\}.
\end{equation}
and the family of sampling instants
\begin{equation}
 \mathscr{T}:=\left\{\{t_k\}_{k\in\mathbb{Z}_{\ge0}}\left|\begin{array}{c}
      T_k:=t_{k+1}-t_k\in[\Tmin ,\Tmax ],t_0=0,\ k\in\mathbb{Z}_{\ge0}
    \end{array}\right.\right\}
\end{equation}
where $0\le \Tmin \le \Tmax <\infty$. When $\Tmin=\Tmax=\bar T$, we recover the well-known periodic sampling case.}

One of the advantages of the impulsive framework is that it can exactly represent sampled-data systems by augmenting the state of the system with the control input as follows. On the strength of this fact, the closed-loop system \eqref{eq:mainsyst2}-\eqref{eq:K_MDT3} can be written as
\bll{\begin{equation}\label{eq:mainsystH_SD}
\begin{array}{rcl}
\begin{bmatrix}
  \dot{x}(t)\\
  \dot{u}(t)
\end{bmatrix}&=& A_a(\rho(t))\begin{bmatrix}
  x(t)\\
  u(t)
\end{bmatrix},\ t\in(t_k,t_{k+1}],k\in\mathbb{Z}_{\ge0}\\
\begin{bmatrix}
  x(t_k^+)\\
  u(t_k^+)
\end{bmatrix}&=&J_a(\rho(t_k))\begin{bmatrix}
  x(t_k)\\
  u(t_k)
\end{bmatrix},\ k\in\mathbb{Z}_{>0},
  \end{array}
\end{equation}
where $J_a(\rho)=J_0(\rho)+B_0K(\rho)$ together with
\begin{equation}
  A_a(\rho):=\begin{bmatrix}
    A(\rho) & B(\rho)\\
    0 & 0
  \end{bmatrix},J_0(\rho):=\begin{bmatrix}
    J(\rho) & 0\\
    0 & 0
  \end{bmatrix},B_0:=\begin{bmatrix}
    0\\
    I_m
  \end{bmatrix},\ \textnormal{and }K(\rho):=\begin{bmatrix}
    K_1(\rho) & K_2(\rho)
  \end{bmatrix}.
\end{equation}}

We can now state the stabilization result of the section:
\begin{theorem}\label{th:rangeDT_DTstabz}
\bll{Assume that the parameter trajectories satisfy Assumption \ref{hyp:param} and let $0<\Tmin \le \Tmax <\infty$ be given. Assume further that there exist a continuously differentiable matrix-valued function $X:[0, \Tmax ]\times\mathcal{P}\mapsto \mathbb{S}^{n+m}$, $X(0,\rho)\succ0$, $\rho\in\mathcal{P}$, a matrix-valued function $U:\mathcal{P}\mapsto \mathbb{R}^{m\times (n+m)}$ and a scalar $\eps>0$ such that the conditions}
%
%$X:[0, \Tmax ]\times\mathcal{P}\mapsto \mathbb{S}^{n+m}_{\succ0}$, a matrix-valued function $U:\mathcal{P}\mapsto \mathbb{R}^{m\times (n+m)}$ and a scalar $\eps>0$ such that the conditions
     \begin{equation}\label{eq:dskldksdlsdlkdlakdskl1}
      \partial_{\varsigma} X(\varsigma,\rho)-\sum_{i=1}^N\partial_{\rho_i} X(\varsigma,\rho)\mu_i+\He[A_a(\rho)X(\varsigma,\rho)]\preceq0
    \end{equation}
    and
     \begin{equation}\label{eq:dskldksdlsdlkdlakdskl2}
      \begin{bmatrix}
        -X(0,\rho)+\eps I_n & [J_0(\rho)X(0,\rho)+B_0U(\rho)]^{\T}\\
        \star & -X(\xi,\rho)
      \end{bmatrix}\preceq0
    \end{equation}
    hold for all $\rho\in\mathcal{P}$, all $\mu\in\mathcal{D}^v$, all $\varsigma\in[0, \Tmax ]$ and all $\xi\in[\Tmin ,\Tmax ]$.

\blue{Then, the sampled-data LPV system \eqref{eq:mainsyst2}-\eqref{eq:K_MDT3}  with parameter trajectories in $\mathscr{P}_{\hspace{-3pt}\infty}$ is uniformly exponentially stable under the range dwell-time condition $[\Tmin ,\Tmax ]$ (i.e. for all sequences of jumping instants in $\mathscr{T}$)  with the controller gain  \bll{$K(\rho)=U(\rho)X(0,\rho)^{-1}$.} \hfill\mendth}
\end{theorem}
\begin{proof}
  \bll{As the proof follows from similar arguments as the proof of Theorem \ref{th:minDT_CT}, it is only sketched. First of all, note that $P(\tau,\rho)$ is invertible for all $(\tau,\rho)\in[0,\Tmax]\times \mathcal{P}$ since it is positive definite (see the other proofs where the positivity definiteness of the function $x^{\T}P(\tau,\rho)x$ is proven). Performing a congruence transformation with respect to $X(\varsigma,\rho):=P(\varsigma,\rho)^{-1}$ on \eqref{eq:rdt:1}, where we have substituted $A$ by $A_a$, yields \eqref{eq:dskldksdlsdlkdlakdskl1}.  The sign inversions in the derivative terms comes from the fact that for any invertible matrix $R(s)$, we have that $\textstyle\frac{d}{ds}R(s)^{-1}=-R(s)^{-1}\left(\frac{d}{ds}R(s)\right)R(s)^{-1}$. Performing now a congruence transformation with respect to $X(0,\rho)$ on \eqref{eq:rdt:2}, where we have substituted $J$ by $J_a$, followed by a Schur complement and the change of variables $U(\rho)=K(\rho)X(0,\rho)$  yields a condition that is equivalent to \eqref{eq:dskldksdlsdlkdlakdskl2}. This proves the result.}
\end{proof}

\bll{In the previous section, it was anticipated that the use of reverse timer conditions would allow us to obtain convex stabilization conditions using a dwell-time-independent sampled-data controller without using any relaxed conditions nor slack variables. While the above result obviously demonstrates the correctness of this claim, we now discuss what would have happened, had we considered, for instance, Theorem \ref{th:cstDT} generalized to the range dwell-time case, i.e. $\bar T\in[\Tmin,\Tmax]$. In such a case, performing the same manipulations as in the proof of Theorem \ref{th:rangeDT_DTstabz} would have yielded a term of the form $K(\rho)P(\xi,\rho)^{-1}$ where $\xi\in[\Tmin,\Tmax]$. In the time-domain, $\xi$ can be substituted by either $T_{k}$ or $T_{k-1}$, which means that to obtain convex conditions, it would be necessary that $K$ be also a function of $\xi$. While it could be argued that the dwell-time could, in fact, be also used in the control law, the goal of this section was to obtain a controller that only depends on the parameter vector, which is interestingly a more delicate problems in the light of this remark and the above result.}

\bll{It seems also interesting to explain why it is difficult here to design a non-filtered controller using Theorem \ref{th:rangeDT_DTstabz}; i.e. a controller for which $K_2\equiv0$.  It was shown in \cite{Briat:13d}, in the context of the sampled-data control of LTI systems, that the stabilization conditions could be adapted in order to design such a controller. This was achieved by exploiting the fact that considering a block-diagonal upper-triangular Lyapunov matrix is both necessary and sufficient for proving the stability of the embedded discrete-time system associated with the sampled-data system. More concretely, the  following linear constraints $[X(0)]_{21}=0$ and $U_2=0$ were added to the stabilization conditions and the resulting  result was successfully used to design a non-filtered sampled-data controller. Unfortunately, this approach cannot be readily adapted to the present case as it would require the $(2,1)$ block of $X(0,\rho)$ to be zero, and a necessary condition for this constraint to be satisfied is that the $(2,1)$ block of $X(\tau,\rho)$ be independent of $\rho$, a constraint which is likely to dramatically increase the conservatism of the approach, ultimately leading to infeasible problems. A way around this problem would be to consider dilated LMI conditions for range dwell-time stability analysis in the same spirit as those obtained in Theorem \ref{th:minDT_CTrelax} in the minimum dwell-time case. This alternative way is not addressed any further in this paper for the sake of conciseness.
}

\subsection{Examples}

To illustrate the interest of the approach, we consider here three examples from the literature.

\begin{example}
  Let us consider back the system \eqref{eq:systex_stabz} with the difference that we now aim at stabilizing it with a gain-scheduled sampled-data state-feedback controller of the form \eqref{eq:K_MDT}. Solving for the sum of squares conditions associated with the conditions stated in Theorem \ref{th:rangeDT_DTstabz} with $d=2$, $\nu=1$, $\Tmin=0.01$ and $\Tmax=0.1$ yields the controller gain
  \begin{equation}\label{eq:controllldlsdsdlsd}
\begin{array}{rcl}
  K(\rho)&=&\dfrac{1}{\textnormal{den}(\rho)}\begin{bmatrix}
     3.01-2.00\rho+5.52\rho^2-2.43\rho^3-0.59\rho^4+0.69\rho^5+0.04\rho^6\\
    -0.74-0.29\rho+0.77\rho^2-1.13\rho^3+0.10\rho^4+0.24\rho^5+0.08\rho^6\\
    \hline
 -0.002+0.014\rho+0.029\rho^2-0.46\rho^3+1.10\rho^4-0.95\rho^5+0.28\rho^6
  \end{bmatrix}^{\T},\\
  \textnormal{den}(\rho)&=&-0.32+0.56\rho-1.20\rho^2+0.45\rho^3+1.15\rho^4-1.18\rho^5+0.23\rho^6.
\end{array}
\end{equation}
  Computational-wise, the underlying semidefinite program has 3078/525 primal/dual variables and is solved in 7.88sec. The trajectory of the closed-loop system is depicted in the top panel of Figure \ref{fig:MySystemCD} for the parameter trajectory $\rho(t)=(1+\sin(2\nu t))/2$ and initial condition $x_0=(-1,1)$, $u_0=0$. The dwell-time values have been randomly selected in the interval $[0.01,,0.1]$.
  \begin{figure}
    \centering
    \includegraphics[width=0.8\textwidth]{./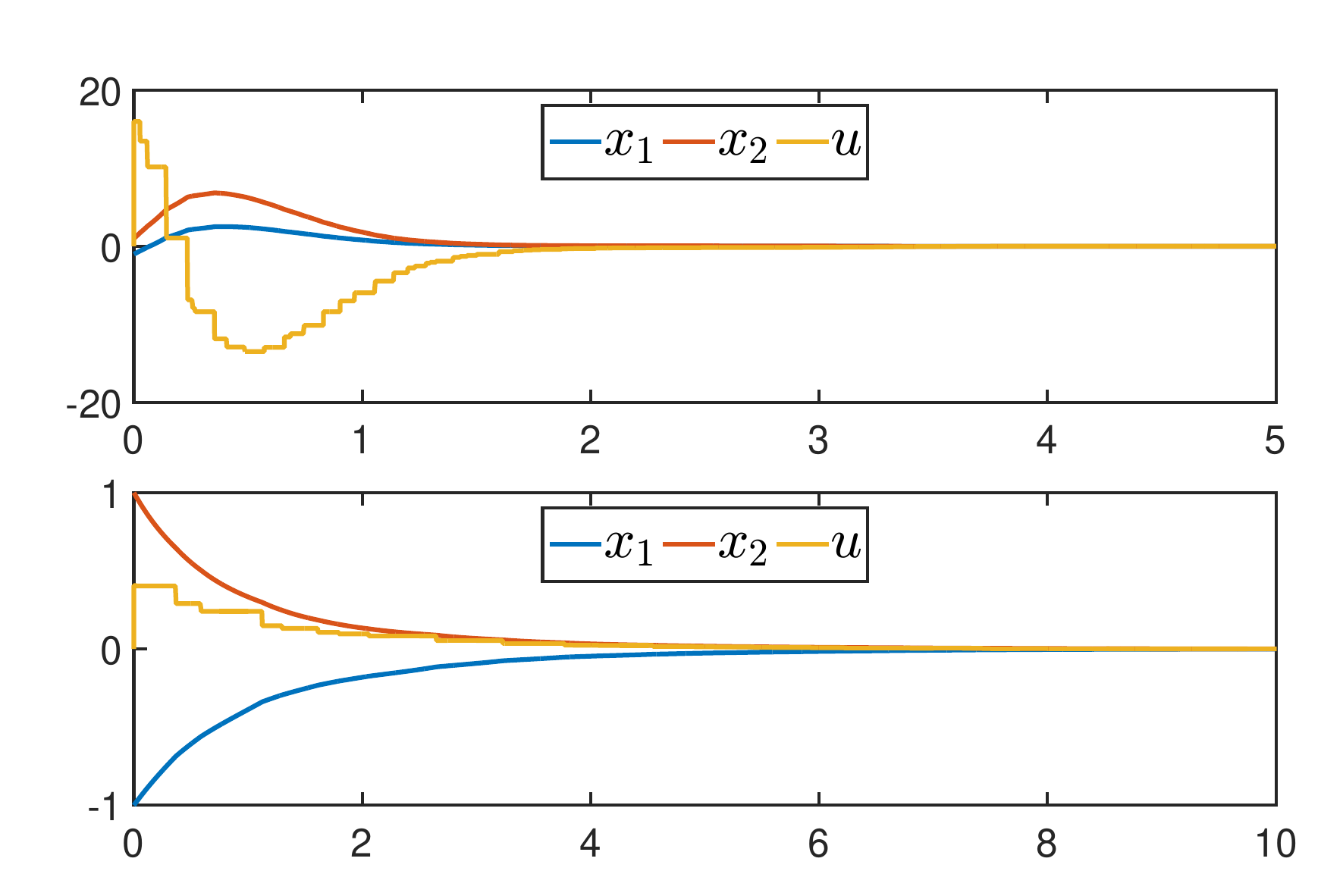}
    \caption{Trajectories of the closed-loop system \eqref{eq:systex_stabz}-\eqref{eq:K_MDT} (top) and \eqref{eq:Ramezanifar}-\eqref{eq:K_MDT} (bottom).}\label{fig:MySystemCD}
  \end{figure}
\end{example}

\begin{example}
  Let us consider the system \citep{Ramezanifar:12}
  \begin{equation}\label{eq:Ramezanifar}
    \dot{x}=\begin{bmatrix}
      2\rho & 1.1+\rho\\
      -2.2+\rho &-3.3+0.1\rho
    \end{bmatrix}x+\begin{bmatrix}
      2\rho\\
      0.1+\rho
    \end{bmatrix}u
  \end{equation}
  where $\rho(t)=\sin(0.2t)$, hence $\mathcal{P}=[-1,1]$ and $\mathcal{D}=[-0.2,0.2]$. It was shown in \citep{Ramezanifar:12} that this system could be stabilized at least up to $\Tmax =0.6$ using an input-delay model for the zero-order hold and a parameter-dependent Lyapunov-Krasovskii functional.
 Using polynomials of order 4 in the SOS conditions, we can solve the SOS program associated with Theorem \ref{th:rangeDT_DTstabz} and find a controller that makes the closed-loop system stable for all $T_k\in[0.001,0.6]$. The program has 9618/966 primal/dual variables and is solved in 36.26sec. The simulation results are depicted in Figure \ref{fig:MySystemCD} \bblue{where the dwell-time values were randomly selected in the interval $[0.001,0.6]$. The initial conditions are chosen to be $x_0=(-1,1)$, $u_0=0$.}
\end{example}

\begin{example}
  Let us consider now the system \citep{GomesdaSilva:15,Gomes:18}
  \begin{equation}\label{eq:Gomes}
    \dot{x}=\begin{bmatrix}
      0 & 1\\
      0.1 &0.4+0.6\rho
    \end{bmatrix}x+\begin{bmatrix}
      0\\
      1
    \end{bmatrix}u
  \end{equation}
   where $\rho(t)=\sin(\nu t)$. Hence, $\mathcal{P}=[-1,1]$ and $\mathcal{D}=[-\nu,\nu]$. Using a looped-functional approach, it was shown in \citep{GomesdaSilva:15} that, for $\Tmin =0.001$, this system could be stabilized up to $\Tmax =1.264$ when $\nu=0.2$ and up to $\Tmax =0.8$ when $\nu=1$. \bll{A refined approach from the same authors \cite{Gomes:18} yielded a value $\Tmax=1.349$.} Using Theorem \ref{th:rangeDT_DTstabz} with $d=4$, we can show that, for both $\nu=0.2$ and $\nu=1$, we can find a controller that stabilizes the system for all $T_k\in[0.001,1.3]$. \bl{In fact, the system remains stabilizable up to at least $\Tmax=2$ using the proposed approach.} The number of primal/dual variables is 9618/966 and the problem solves in approximately 25sec. For simulation purposes, we set $\Tmax =0.4$ for both $\nu=0.2$ and $\nu=1$, and we design controllers using Theorem  \ref{th:rangeDT_DTstabz} with $d=2$ (in this case, the number of primal/dual variables is given by 3078/525 and the problem is solved in 7sec). Using the initial conditions $x_0=(-1,1)$, $u_0=0$ \bblue{and random sequences of dwell-times in $[0,0.4]$}, we get the trajectories depicted in Figure \ref{fig:Gomes}. Note that, as in \citep{GomesdaSilva:15}, using a controller designed for $\Tmax =1.3$ would result in a very slow response for the closed-loop system which is not desirable.
      \begin{figure}
    \centering
    \includegraphics[width=0.8\textwidth]{./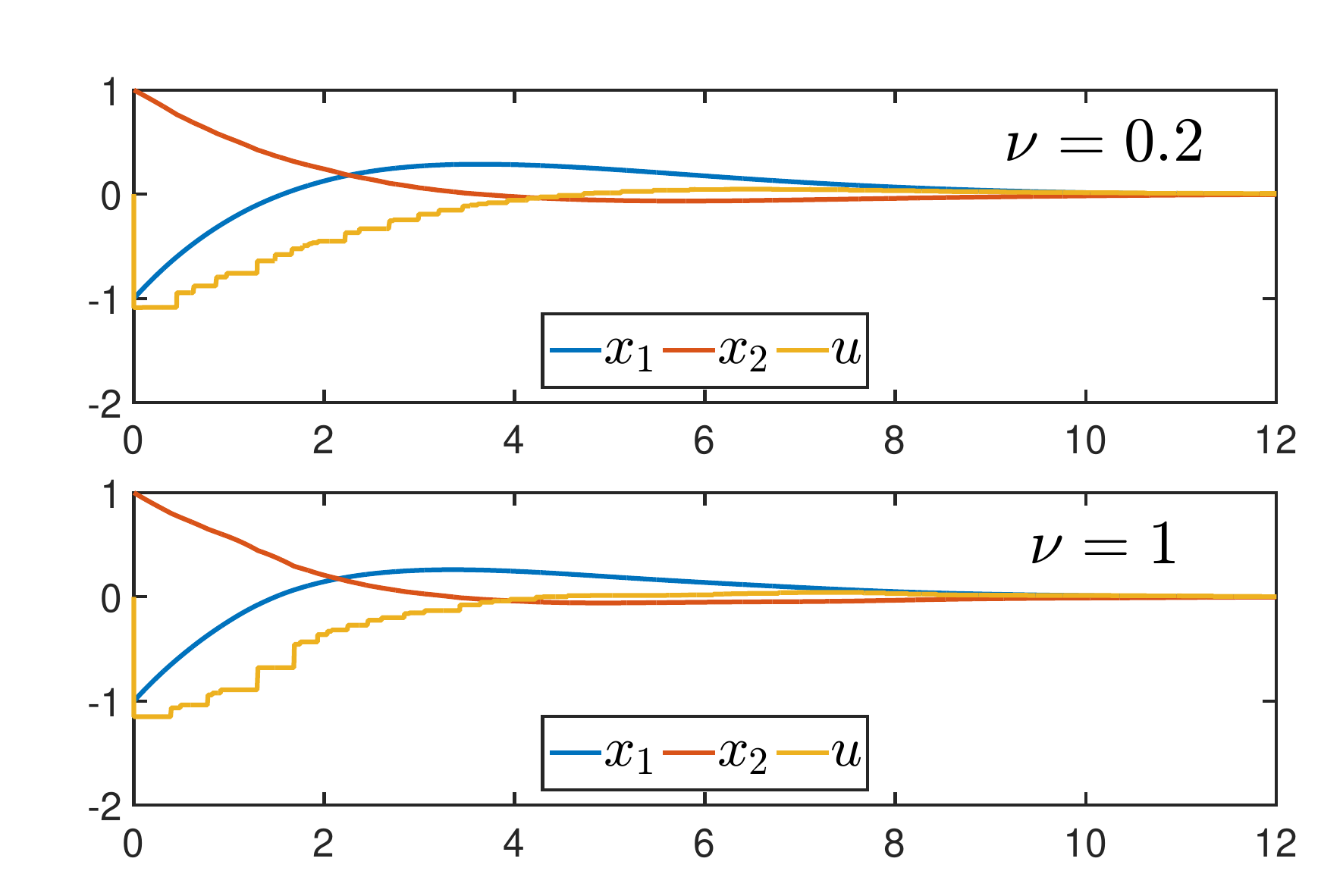}
    \caption{Trajectories of the closed-loop system \eqref{eq:Gomes}-\eqref{eq:K_MDT}.}\label{fig:Gomes}
  \end{figure}
\end{example}

\section{Discussion and Future Works}

\bll{The main drawback of the approach is certainly its complexity since the approach relies on infinite-dimensional semidefinite programs whose SDP approximations are typically large in size (i.e. in both the number of variables and the number constraints). This is the price to pay to obtain accurate stability and stabilization conditions. Indeed, computationally unfair comparative examples tend to suggest that the proposed approach leads to better results than previously obtained in the literature. However, it is expected in a near future to have improvements at the solver-level that will make this kind of approaches applicable to larger systems. Note, finally, that LMI methods are only restricted to small to medium size problems and they are in general not applicable to large systems unless the resulting semidefinite programs satisfy certain convenient structural properties, such as chordal sparsity \cite{Fujisawa:97,Zheng:19}, that can be exploited by the solver to reduce the complexity and speed-up the solving time. Potential extensions of the obtained results include the consideration of different types of Lyapunov functions such as polyhedral or homogeneous ones, and the consideration of additional clocks in order to consider multiple types of discrete events (such as control update and parameter discontinuities events). Converse results along the lines of \citep{Wirth:05,Goebel:12} for this class of systems could also be very interesting to obtain. Finally, the derivation of convex stabilization conditions for the design of dynamic output-feedback controller is a problem which is also worth investigating.}

{\small 
% Generated by IEEEtran.bst, version: 1.14 (2015/08/26)

}

%%\linespread{0.06}\selectfont
%{\small
%\bibliographystyle{IEEEtran}
%%\bibliographystyle{unsrtnat}
%\bibliography{../../../../Lastbib/global,../../../../Lastbib/briat}}

\end{document}